\newcommand{\ben}{\begin{enumerate}}
\newcommand{\een}{\end{enumerate}}
\newcommand{\ble}{\begin{lem}}
\newcommand{\ele}{\end{lem}}
\newcommand{\bth}{\begin{theorem}}
\renewcommand{\eth}{\end{theorem}}
\newcommand{\bpr}{\begin{prop}}
\newcommand{\epr}{\end{prop}}
\newcommand{\bco}{\begin{cor}}
\newcommand{\eco}{\end{cor}}
\newcommand{\bcon}{\begin{conj}}
\newcommand{\econ}{\end{conj}}
\newcommand{\bde}{\begin{defn}}
\newcommand{\ede}{\end{defn}}
\newcommand{\bex}{\begin{exa}}
\newcommand{\eex}{\end{exa}}
\newcommand{\barr}{\begin{array}}
\newcommand{\earr}{\end{array}}
\newcommand{\btab}{\begin{tabular}}
\newcommand{\etab}{\end{tabular}}
\newcommand{\beq}{\begin{equation}}
\newcommand{\eeq}{\end{equation}}
\newcommand{\bea}{\begin{eqnarray*}}
\newcommand{\eea}{\end{eqnarray*}}
\newcommand{\bce}{\begin{center}}
\newcommand{\ece}{\end{center}}
\newcommand{\bpi}{\begin{picture}}
\newcommand{\epi}{\end{picture}}
\newcommand{\bpp}{\begin{picture}}
\newcommand{\epp}{\end{picture}}
\newcommand{\bfi}{\begin{figure} \begin{center}}
\newcommand{\efi}{\end{center} \end{figure}}
\newcommand{\bprf}{\begin{proof}}
\newcommand{\eprf}{\end{proof}\medskip}
\newcommand{\capt}{\caption}
\newcommand{\bsl}{\begin{slide}{}}
\newcommand{\esl}{\end{slide}}
\newcommand{\bfr}{\begin{frame}}
\newcommand{\efr}{\end{frame}}
\newcommand{\qed}{\rule{1ex}{1ex}}
\newcommand{\hqed}{\hfill$\square$}
\newcommand{\eqed}[1]{$\textcolor{white}{\qed}\hfill{\dil#1}\hfill\square$}
\newcommand{\ol}{\overline}
\newcommand{\hs}[1]{\hspace{#1}}
\newcommand{\hso}[1]{\hspace{-1pt}}
\newcommand{\vs}[1]{\vspace{#1}}
\newcommand{\emp}{\emptyset}
\newcommand{\sbe}{\subseteq}
\newcommand{\setm}{\setminus}
\newcommand{\iso}{\cong}
\newcommand{\Cong}{\equiv}
\newcommand{\ptn}{\vdash}
\newcommand{\bdy}{\partial}
\newcommand{\case}[4]{\left\{\barr{ll}#1&\mbox{#2}\\#3&\mbox{#4}\earr\right.}
\newcommand{\ce}[1]{\lceil #1 \rceil}
\newcommand{\gau}[2]{\left[ \barr{c} #1 \\ #2 \earr \right]}
\def\<{\langle}
\def\>{\rangle}
\newcommand{\spn}[1]{\langle{#1}\rangle}
\newcommand{\ree}[1]{(\ref{#1})}
\newcommand{\ra}{\rightarrow}
\newcommand{\llra}{\longleftrightarrow}
\newcommand{\al}{\alpha}
\newcommand{\be}{\beta}
\newcommand{\ga}{\gamma}
\newcommand{\ep}{\epsilon}
\newcommand{\io}{\iota}
\newcommand{\la}{\lambda}
\newcommand{\om}{\omega}
\newcommand{\si}{\sigma}
\newcommand{\Ga}{\Gamma}
\newcommand{\De}{\Delta}
\newcommand{\Om}{\Omega}
\newcommand{\1}{{\bf 1}}
\newcommand{\2}{{\bf 2}}
\newcommand{\3}{{\bf 3}}
\newcommand{\4}{{\bf 4}}
\newcommand{\5}{{\bf 5}}
\newcommand{\6}{{\bf 6}}
\newcommand{\ba}{{\bf a}}
\newcommand{\bb}{{\bf b}}
\newcommand{\bc}{{\bf c}}
\newcommand{\bi}{{\bf i}}
\newcommand{\bs}{{\bf s}}
\newcommand{\bv}{{\bf v}}
\newcommand{\bw}{{\bf w}}
\newcommand{\bbC}{{\mathbb C}}
\newcommand{\bbN}{{\mathbb N}}
\newcommand{\bbP}{{\mathbb P}}
\newcommand{\bbQ}{{\mathbb Q}}
\newcommand{\bbR}{{\mathbb R}}
\newcommand{\bbZ}{{\mathbb Z}}
\newcommand{\cO}{{\cal O}}
\newcommand{\cP}{{\cal P}}
\newcommand{\cS}{{\cal S}}
\newcommand{\cT}{{\cal T}}
\newcommand{\fS}{{\mathfrak S}}
\newcommand{\cb}{\ol{c}}
\newcommand{\Hb}{\ol{H}}
\newcommand{\kb}{\ol{k}}
\newcommand{\ctt}{\operatorname{ct}}
\newcommand{\des}{\operatorname{des}}
\newcommand{\Des}{\operatorname{Des}}
\newcommand{\dia}{\operatorname{diag}}
\newcommand{\GL}{\operatorname{GL}}
\newcommand{\Hilb}{\operatorname{Hilb}}
\newcommand{\inv}{\operatorname{inv}}
\newcommand{\Inv}{\operatorname{Inv}}
\newcommand{\Mat}{\operatorname{Mat}}
\newcommand{\maj}{\operatorname{maj}}
\newcommand{\Mod}{\operatorname{mod}}
\newcommand{\rk}{\operatorname{rk}}
\newcommand{\sh}{\operatorname{sh}}
\newcommand{\sta}{\operatorname{st}}
\newcommand{\Sym}{\operatorname{Sym}}
\newcommand{\SYT}{\operatorname{SYT}}
\newcommand{\tr}{\operatorname{tr}}
\newcommand{\bul}{\bullet}
\newcommand{\dil}{\displaystyle}
\newcommand{\Cat}{\operatorname{Cat}}
\newcommand{\NCM}{\operatorname{NCM}}
\newcommand{\NC}{\operatorname{NC}}
\newcommand{\CP}{\operatorname{CP}}
\newcommand{\exc}{\operatorname{exc}}
\newcommand{\Exc}{\operatorname{Exc}}
\newcommand{\stc}{\operatorname{sc}}
\newcommand{\Gal}{\operatorname{Gal}}
\newcommand{\SSYT}{\operatorname{SSYT}}
\newcommand{\csn}{\models}
\newcommand{\con}{\operatorname{ct}}
\newcommand{\conv}{\operatorname{conv}}
\newcommand{\Gaa}{\put(0,0){\circle*{3}}}
\newcommand{\Gba}{\put(10,0){\circle*{3}}}
\newcommand{\Gca}{\put(20,0){\circle*{3}}}
\newcommand{\Gda}{\put(30,0){\circle*{3}}}
\newcommand{\Gea}{\put(40,0){\circle*{3}}}
\newcommand{\Gfa}{\put(50,0){\circle*{3}}}
\newcommand{\Gga}{\put(60,0){\circle*{3}}}
\newcommand{\Gcb}{\put(20,10){\circle*{3}}}
\newcommand{\Ggb}{\put(60,10){\circle*{3}}}
\newcommand{\Gaaba}{\put(0,0){\line(1,0){10}}}
\newcommand{\Gbaca}{\put(10,0){\line(1,0){10}}}
\newcommand{\Gcada}{\put(20,0){\line(1,0){10}}}
\newcommand{\Gcacb}{\put(20,0){\line(0,1){10}}}
\newcommand{\Gdaea}{\put(30,0){\line(1,0){10}}}
\newcommand{\Geafa}{\put(40,0){\line(1,0){10}}}
\newcommand{\Gfaga}{\put(50,0){\line(1,0){10}}}
\newcommand{\Gfagb}{\put(50,0){\line(1,1){10}}}
\begin{document}
\makebcctitle

\begin{abstract}
The cyclic sieving phenomenon was defined by Reiner, Stanton, and White in a
2004 paper.  Let $X$ be a finite set, $C$ be a finite cyclic group acting on
$X$, and $f(q)$ be a polynomial in $q$ with nonnegative integer coefficients.
Then the triple $(X,C,f(q))$ exhibits the {\it cyclic sieving phenomenon\/}
if, for all $g\in C$, we have
$$
\# X^g = f(\om)
$$
where $\#$ denotes cardinality, $X^g$ is the fixed point set of $g$, and $\om$
is a root of unity chosen to have the same order as $g$.   It might seem
improbable that substituting a root of unity into a polynomial with integer
coefficients would have an enumerative meaning.  But many instances of the cyclic
sieving phenomenon have now been found.
Furthermore, the proofs that this phenomenon hold often involve interesting
and sometimes deep results from representation theory.
We will survey the current literature on cyclic sieving, providing the
necessary background about representations, Coxeter groups, and other
algebraic aspects as needed.
\end{abstract} 
\thankyou{The author would like to thank Vic Reiner, Martin Rubey, Bruce
  Westbury, and an anonymous referee for 
  helpful suggestions.  And he would particularly like to thank Vic Reiner for
  enthusiastic encouragement.  This work was partially done while the author
  was a Program Officer at NSF.  The views expressed are not necessarily those
  of the NSF.} 





\section{What is the cyclic sieving phenomenon?}

Reiner, Stanton, and White introduced the cyclic sieving phenomenon in their seminal 2004 paper~\cite{rsw:csp}.  In order to define this concept, we need three ingredients.  The first of these is a finite set, $X$.  The second is a finite cyclic group, $C$, which acts on $X$.  Given a group element $g\in C$, we denote its fixed point set by
\beq
\label{X^g}
X^g=\{y\in X\ :\ gy=y\}.
\eeq
We also let $o(g)$ stand for the order of $g$ in the group $C$.  One group which will be especially important in what follows will be the group, $\Om$, of roots of unity.  We let $\om_d$ stand for a primitive $d$th root of unity. The reader should think of $g\in C$ and $\om_{o(g)}\in\Om$ as being linked because they have the same order in their respective groups.  The final ingredient is a polynomial $f(q)\in\bbN[q]$, the set of polynomials in the variable $q$ with nonnegative integer coefficients.  Usually $f(q)$ will be a generating function associated with $X$.
\begin{Def}
The triple $(X,C,f(q))$ exhibits the {\em cyclic sieving phenomenon\/} (abbreviated CSP) if, for all $g\in C$, we have
\beq
\label{csp}
\# X^g = f(\om_{o(g)})
\eeq
where the hash symbol denotes cardinality.
\end{Def}

Several comments about this definition are in order.  At first blush it may
seem very strange that plugging a complex number into a polynomial with
nonnegative integer coefficients would yield a nonnegative integer, much less
that the result would count something.  However, the growing literature on the
CSP shows that this phenomenon is quite wide spread.  Of course, using linear
algebra it is always possible to find some polynomial which will satisfy the
system of equations given by~\ree{csp}.  
And in Section~\ref{rtb} we will see, via equation~\ree{chi2}, that the
polynomial can be taken to have nonnegative integer coefficients.
But the point is that $f(q)$ should
be a polynomial naturally associated with the set $X$.  In fact, letting $g=e$
(the identity element of $C$) in~\ree{csp}, it follows that  
\beq
\label{f(1)}
f(1)=\# X.
\eeq

In the case $\# C=2$, the CSP reduces to Stembridge's ``$q=-1$ phenomenon''
\cite{ste:mrp,ste:shr,ste:cbs}.  Since the Reiner-Stanton-White paper,
interest in cyclic sieving has been steadily increasing.  But since the area
is relatively new, this survey will be able to at least touch on all of the
current literature on the subject.   Cylic sieving illustrates a beautiful
interplay between cominbatorics and algebra.  We will provide all the algebraic
background required to understand the various instances of the CSP discussed.

The rest of the paper is organized as follows.  Most proofs of cyclic sieving phenomena fall into two broad categories: those involving explicit evaluation of both sides of~\ree{csp}, and those using representation theory.  In the next section, we will introduce our first example of the CSP and give a demonstration of the former type.  Section~\ref{rtb} will provide the necessary representation theory background to present a proof of the second type for the same example. 
In the following section, we will develop a paradigm for representation theory
proofs in general.  Since much of the work that has been done on CSP involves
Coxeter groups and permutation statistics, Section~\ref{cgp} will  provide a
brief introduction to them.  In Section~\ref{crg} we discuss the regular
elements of Springer~\cite{spr:ref} which have become a useful tool in proving CSP results.
Section~\ref{prs} is  concerned with Rhoades' startling theorem~\cite{rho:csp}
connecting the CSP and Sch\"utzenberger's promotion operator~\cite{sch:pme} on
rectangular standard Young tableaux.  The section following that discusses
work related to Rhoades' result.  In Section~\ref{mgm} we consider
generalizations of the CSP using more than one group or more than one
statistic.  Instances of the CSP related to Catalan numbers are discussed in
Section~\ref{ncsp}. 
The penultimate section contains some results which do not fit nicely into one
of the previous sections.  And the final section consists of various remarks. 

\bigskip

\section{An example and a proof}
\label{eap}

We will now consider a simple example of the CSP and show that~\ree{csp} holds by explicitly evaluating both sides of the equation.  For $n\in\bbN$, let $[n]=\{1,2,\ldots, n\}$.  A {\it multiset on $[n]$\/} is an unordered family, $M$, of elements of $[n]$ where repetition is allowed.  Since order does not matter, we will always list the elements of $M$ in weakly increasing order: $M=i_1 i_2\ldots i_k$ with $1\le i_1\le i_2\le \ldots\le i_k\le n$.  The set for our CSP will be
\beq
\label{(())}
X=\left(\hs{-3pt}{[n]\choose k}\hs{-3pt}\right) \stackrel{\rm def}{=} 
\{M\ :\ \mbox{$M$ is a multiset on $[n]$ with $k$ elements.}\}.
\eeq
To illustrate, if $n=3$ and $k=2$ then $X=\{11,22,33,12,13,23\}$.

For our group, we take one generated by an $n$-cycle
$$
C_n=\spn{(1,2,\ldots,n)}.
$$
Then $g\in C_n$ acts on $M=i_1 i_2\ldots i_k$ by
\beq
\label{gM}
gM=g(i_1)g(i_2)\ldots g(i_k)
\eeq
where we rearrange the right-hand side to be in increasing order. Returning to the $n=3,k=2$ case we have $C_3=\{e,(1,2,3),(1,3,2)\}$.  The action of $g=(1,2,3)$ is
\beq
\label{(1,2,3)}
\barr{lll}
(1,2,3)11=22, & (1,2,3)22=33, &(1,2,3)33=11,\\
(1,2,3)12=23, & (1,2,3)13=12, &(1,2,3)23=13.  
\earr
\eeq

To define the polynomial we will use, consider the geometric series
\beq
\label{[n]}
[n]_q=1+q+q^2+\cdots+q^{n-1}.
\eeq
This is known as a {\it $q$-analogue of $n$\/} because setting $q=1$ gives $[n]_1=n$.
Do not confuse $[n]_q$ with the set $[n]$ which has no subscript.  Now, for 
$0\le k\le n$,  define the {\it Gaussian polynomials\/} or {\it $q$-binomial coefficients\/} by
\beq
\label{gaunk}
\gau{n}{k}_q=\frac{[n]_q!}{[k]_q![n-k]_q!}
\eeq
where $[n]_q!=[1]_q[2]_q\cdots[n]_q$.  It is not clear from this definition that these rational functions are actually polynomials with nonnegative integer coefficients, but this is not hard to prove by induction on $n$.  It is well known that 
$\#(\hs{-2pt}{[n]\choose k}\hs{-2pt})={n+k-1\choose k}$.  So, in view
of~\ree{f(1)}, a natural choice for our CSP polynomial is 
$$
f(q)=\gau{n+k-1}{k}_q.
$$

We are now in a position to state our first cyclic sieving result.  It is a
special case of Theorem 1.1(a) in the Reiner-Stanton-White paper~\cite{rsw:csp}. 
\begin{theorem}
\label{multiset}
The cyclic sieving phenomenon is exhibited by the triple
$$
\left(\ \left(\hs{-3pt}{[n]\choose k}\hs{-3pt}\right),\ \spn{(1,2,\ldots,n)},\ \gau{n+k-1}{k}_q\ \right).
$$
\end{theorem}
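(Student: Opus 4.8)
The plan is to verify equation~\ree{csp} directly, by evaluating both sides, as the section title advertises. Fix the generator $c=(1,2,\ldots,n)$ of $C_n$, let $g=c^j$ be an arbitrary element, and set $d=o(g)=n/\gcd(n,j)$; note that $d\mid n$ and that, viewed as a permutation, $g$ consists of $n/d$ cycles, each of length $d$. First I would compute the left side $\#X^g$. Encode a multiset $M\in X$ by its multiplicity vector $(m_1,\ldots,m_n)\in\bbN^n$, where $\sum_i m_i=k$. By~\ree{gM}, the element $g$ fixes $M$ if and only if $(m_i)$ is constant on each cycle of $g$, hence if and only if $M$ is specified by one nonnegative integer per cycle. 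Since each of the $n/d$ cycles contributes $d$ copies of its common value to the size $k$, this forces $d\mid k$; and when $d\mid k$, a $g$-fixed multiset is exactly an arbitrary multiset of size $k/d$ on the set of $n/d$ cycles. Therefore $X^g=\emp$ unless $d\mid k$, while if $d\mid k$ then $\#X^g={(n/d)+(k/d)-1\choose k/d}$. (The special case $g=e$, $d=1$, recovers $\#X^e=\#X={n+k-1\choose k}$, consistent with~\ree{f(1)}.)

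Next I would evaluate the right side, $f(\om_d)=\gau{n+k-1}{k}_{\om_d}$. The main tool is the $q$-analogue of Lucas' theorem: if $\om_d$ is a primitive $d$th root of unity and $a=da_1+a_0$, $b=db_1+b_0$ with $0\le a_0,b_0<d$, then $\gau{a}{b}_{\om_d}={a_1\choose b_1}\gau{a_0}{b_0}_{\om_d}$, where the second factor is interpreted as $0$ when $b_0>a_0$. I would either cite this classical fact or include its short proof: writing $\gau{a}{b}_q=(q;q)_a\big/\big((q;q)_b\,(q;q)_{a-b}\big)$ with $(q;q)_m=\prod_{i=1}^m(1-q^i)$, the factor $1-q^i$ vanishes at $q=\om_d$ exactly when $d\mid i$, so numerator and denominator vanish to orders $a_1$ and $b_1+\lfloor(a-b)/d\rfloor$; cancelling the vanishing factors in pairs via $\lim_{q\to\om_d}(1-q^{dm})/(1-q^{dm'})=m/m'$ produces the integer binomial ${a_1\choose b_1}$, and the surviving factors reassemble into $\gau{a_0}{b_0}_{\om_d}$. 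I apply this with $a=n+k-1$ and $b=k$; since $d\mid n$ we have $n+k-1\equiv k-1\pmod d$, so $a_0=(k-1)\bmod d$, $b_0=k\bmod d$, $a_1=\lfloor(n+k-1)/d\rfloor$, and $b_1=\lfloor k/d\rfloor$.

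Then I would match the two computations by a short case check. If $d\mid k$: then $b_0=0$ and $b_1=k/d$, while $d\mid(n+k)$ gives $a_0=d-1$ and $a_1=(n+k-d)/d=(n/d)+(k/d)-1$; since $\gau{d-1}{0}_{\om_d}=1$, this yields $f(\om_d)={(n/d)+(k/d)-1\choose k/d}=\#X^g$. If $d\nmid k$: then $b_0=k\bmod d\in\{1,\ldots,d-1\}$ and $a_0=b_0-1<b_0$, so the $q$-binomial factor $\gau{a_0}{b_0}_{\om_d}$ vanishes (its top index being strictly smaller than its bottom), giving $f(\om_d)=0=\#X^g$. In both cases~\ree{csp} holds for $g$, which, since $g$ was arbitrary, proves the theorem.

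The step I expect to be the main obstacle is the root-of-unity evaluation of the Gaussian polynomial, that is, the $q$-Lucas identity together with the limit $\lim_{q\to\om_d}(1-q^{dm})/(1-q^{dm'})=m/m'$ and the careful tracking of residues modulo $d$; once these are in place, everything else is elementary arithmetic. I would also note, as remarked after~\ree{gaunk}, that $\gau{n+k-1}{k}_q\in\bbN[q]$ — provable by induction on $n$ from the Pascal-type recurrence $\gau{m}{\ell}_q=\gau{m-1}{\ell-1}_q+q^\ell\gau{m-1}{\ell}_q$ — so that the CSP statement is even meaningful; this is the only other ingredient required.
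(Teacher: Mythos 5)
Your proposal is correct and takes essentially the same route as the paper's own (elementary) proof: your fixed-point count via multiplicity vectors constant on the cycles of $g$ is exactly the content of the paper's Lemma~\ref{gM=M} and Corollary~\ref{X^g:cor}, and your evaluation of $\gau{n+k-1}{k}_q$ at $\om_d$ reproduces the paper's Corollary~\ref{gau}, merely packaged as the $q$-Lucas theorem rather than the paper's bespoke lemma on $\lim_{q\to\om}[m]_q/[n]_q$ (the underlying cancellation of vanishing factors is identical). The case analysis according to whether $d|k$ and the resulting match $\# X^g=f(\om_d)$ agree with the paper's argument step for step.
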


Before proving this theorem, let us consider the case $n=3,k=2$ in detail.  First of all note that
$$
f(q)=\gau{3+2-1}{2}_q=\gau{4}{2}_q=\frac{[4]_q!}{[2]_q![2]_q!}=\frac{[4]_q[3]_q}{[2]_q}=
1+q+2q^2+q^3+q^4.
$$
Now we can verify that $\# X^g=f(\om_{o(g)})$ case by case.  If $g=e$ then $o(g)=1$, so
let $\om=1$ and compute
$$
f(\om)=f(1)=6=\# X=\# X^e.
$$
If $g=(1,2,3)$ or $(1,3,2)$ then we can use $\om=\exp(2\pi i/3)$ to obtain
$$
f(\om)=1+\om+2\om^2+\om^3+\om^4 = 2 + 2\om + 2\om^2 = 0 =\# X^g
$$
as can be seen from the table~\ree{(1,2,3)} for the action of $(1,2,3)$ which has no fixed points.

In order to give a proof by explicit evaluation, it will be useful to have some more notation.  Another way of expressing a multiset on $[n]$ is $M=\{1^{m_1},2^{m_2},\ldots,n^{m_n}\}$ where $m_i$ is the multiplicity of $i$ in $M$.
Exponents equal to one are omitted as are elements of exponent zero.  For example, $M=222355=\{2^3, 3, 5^2\}$.  Define the {\it disjoint union\/} of multisets $L=\{1^{l_1},2^{l_2},\ldots,n^{l_n}\}$ and $M=\{1^{m_1},2^{m_2},\ldots,n^{m_n}\}$ to be
$$
L\sqcup M = \{1^{l_1+m_1},2^{l_2+m_2},\ldots,n^{l_n+m_n}\}.
$$

Let $\fS_n$ denote the symmetric group of permutations of $[n]$.  Note
that~\ree{gM} defines an action of any $g\in\fS_n$ on multisets and need not
be restricted to elements of the cyclic group.  We will also want to apply the
disjoint union operation $\sqcup$ to cycles in $\fS_n$, in which case we consider each cycle as a set (which is just a multiset with all multiplicities $0$ or $1$).  To illustrate, $(1,4,3)\sqcup(1,4,3)\sqcup(3,4,5)=\{1^2,3^3,4^3,5\}$.  To evaluate 
$\# X^g$ we will need the following lemma.
\begin{lemma}
\label{gM=M}
Let $g\in\fS_n$ have disjoint cycle decomposition $g=c_1 c_2\cdots c_t$.  Then 
$gM=M$ if and only if $M$ can be written as 
$$
M=c_{r_1}\sqcup c_{r_2}\sqcup\cdots\sqcup c_{r_s}
$$
where the cycles in the disjoint union need not be distinct.
\end{lemma}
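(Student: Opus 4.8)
The plan is to translate the statement into a condition on the \emph{multiplicity function} of a multiset. For a multiset $M$ on $[n]$, let $m_M\colon[n]\to\bbN$ send $i$ to its multiplicity in $M$; then $M\mapsto m_M$ is a bijection between multisets on $[n]$ and finitely supported functions $[n]\to\bbN$. First I would read off from~\ree{gM} how the action of $\fS_n$ moves this data: the multiplicity of $j$ in $gM$ is $\#\{\ell : g(i_\ell)=j\}$, so $m_{gM}(j)=m_M(g^{-1}(j))$. Consequently $gM=M$ if and only if $m_M$ is constant on every orbit of $\langle g\rangle$ acting on $[n]$ --- and these orbits are exactly the cycles $c_1,\dots,c_t$, provided (as I shall assume throughout) that the disjoint cycle decomposition $g=c_1\cdots c_t$ lists the fixed points of $g$ as one-cycles, so that the $c_r$ partition $[n]$. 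Thus the lemma will follow once I show: $M$ is a disjoint union of the cycles $c_1,\dots,c_t$ (with repetition allowed) if and only if $m_M$ is constant on each $c_r$.

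For the backward direction, suppose $M=c_{r_1}\sqcup\cdots\sqcup c_{r_s}$. Each $i\in[n]$ lies in a unique cycle $c_j$ among $c_1,\dots,c_t$, and since the $c_r$ are pairwise disjoint, $m_M(i)=\#\{\ell : c_{r_\ell}=c_j\}$, which depends only on $j$; hence $m_M$ is constant on cycles and so $gM=M$. For the forward direction, suppose $m_M$ is constant on each $c_j$ with value $a_j\in\bbN$. Forming the disjoint union in which $c_j$ occurs $a_j$ times, I obtain $\bigsqcup_{j=1}^{t}(\,\underbrace{c_j\sqcup\cdots\sqcup c_j}_{a_j}\,)$, whose multiplicity at a point $i\in c_j$ equals $a_j$ (again because the $c_j$ are disjoint), so this multiset is exactly $M$; discarding the cycles with $a_j=0$ puts it in the stated form, with $s=\sum_j a_j$.

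There is no serious obstacle here: the whole content is the reformulation via multiplicity functions together with the elementary observation that the $\langle g\rangle$-orbits on $[n]$ are the cycles of $g$. The only point demanding care is the cycle-decomposition convention --- fixed points of $g$ must be included as one-cycles --- since otherwise a multiset supported entirely on a fixed point of $g$ would be fixed by $g$ yet fail to be a disjoint union of the (nontrivial) cycles, breaking the forward implication.
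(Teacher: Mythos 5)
Your proof is correct and follows essentially the same route as the paper's: the backward direction is the observation that $g$ fixes each cycle as a set, and the forward direction rests on the same equivalence between ``$M$ is a disjoint union of cycles'' and ``the multiplicity function of $M$ is constant on each cycle,'' which the paper phrases contrapositively (if $M$ is not such a union, some cycle contains $i,j$ with $c(i)=j$ and differing multiplicities). Your explicit identity $m_{gM}(j)=m_M(g^{-1}(j))$ and the remark that fixed points must be listed as one-cycles are just a more careful write-up of the same argument.
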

\begin{proof}
For the reverse direction, note that if $c$ is a cycle of $g$ and $\cb$ is the
corresponding set then $g\cb=\cb$ since $g$ merely permutes the elements of
$\cb$ amongst themselves. So $g$ will also fix disjoint unions of such cycles as
desired. 

To see that these are the only fixed points, suppose that $M$ is not such a disjoint union.  Then there must be some cycle $c$ of $g$ and $i,j\in[n]$ such that $c(i)=j$ but $i$ and $j$ have different multiplicities in $M$.  It follows that $gM \neq M$ which completes the forward direction.
\end{proof}

As an example of this lemma, if $g=(1,2,4)(3,5)$ then the multisets fixed by
$g$ of cardinality at most $5$ are $\{3,5\}$, $\{1,2,4\}$, $\{3^2,5^2\}$, and
$\{1,2,3,4,5\}$. 
We now apply the previous lemma to our case of interest when $C_n=\spn{(1,2,\ldots,n)}$.
In the next result, we use the standard notation $d|k$ to signify that the integer $d$ divides evenly into the integer $k$.
\bco
\label{X^g:cor}
If $X=(\hs{-2pt}{[n]\choose k}\hs{-2pt})$ and $g\in C_n$ has $o(g)=d$, then
$$
\# X^g = 
\case{\dil {n/d+k/d-1\choose k/d}}
{if $d|k$,}
{0\rule{0pt}{20pt}}
{otherwise.}
$$
\eco
\begin{proof}
Since $g$ is a power of $(1,2,\ldots,n)$ and $o(g)=d$, we have that $g$'s disjoint cycle decomposition must consist of $n/d$ cycles each of length $d$.  If $d$ does not divide $k$, then no multiset $M$ of cardinality $k$ can be a disjoint union of the cycles of $g$.  So, by Lemma~\ref{gM=M}, there are no fixed points and this agrees with the ``otherwise'' case above.

If $d|k$ then, by the lemma again, the fixed points of $g$ are those multisets obtained by choosing $k/d$ of the $n/d$ cycles of $g$ with repetition allowed.  The number of ways of doing this is the binomial coefficient in the ``if'' case.
\end{proof}

To evaluate $f(\om_{o(g)})$, we need another lemma.
\begin{lemma}
Suppose $m,n\in\bbN$ satisfy $m\Cong n\ (\Mod d)$, and let $\om=\om_d$.  Then
$$
\lim_{q\ra\om}\frac{[m]_q}{[n]_q}=
\case{\dil \frac{m}{n}}
{if $n\Cong 0\ (\Mod d)$,}
{1\rule{0pt}{20pt}}
{otherwise.}
$$
\end{lemma}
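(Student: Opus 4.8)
The plan is to pass to the closed form $[m]_q=(q^m-1)/(q-1)$, valid for $q\neq 1$, so that on any neighbourhood of $\om$ from which the point $q=1$ has been removed we have $[m]_q/[n]_q=(q^m-1)/(q^n-1)$. Everything then rests on the single fact that $\om=\om_d$ is a \emph{primitive} $d$th root of unity, so that $\om^k=1$ exactly when $d\mid k$.

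First I would handle the case $n\not\Cong 0\ (\Mod d)$, which is automatically vacuous when $d=1$. Here $\om\neq 1$ and $\om^n\neq 1$, so the denominator $[n]_q$ of the original quotient does not vanish at $q=\om$; hence $[m]_q/[n]_q$ is continuous there and the limit is simply $[m]_\om/[n]_\om=(\om^m-1)/(\om^n-1)$. Since $m\Cong n\ (\Mod d)$ and $\om^d=1$ we have $\om^m=\om^n$, so this ratio equals $1$, as required.

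For the case $n\Cong 0\ (\Mod d)$ we also have $m\Cong 0\ (\Mod d)$, so write $m=dm'$ and $n=dn'$ with $m',n'\in\bbN$. Using $q^{dj}-1=(q^d-1)(1+q^d+\cdots+q^{d(j-1)})$, for $q^d\neq 1$ we get
$$
\frac{q^m-1}{q^n-1}=\frac{1+q^d+\cdots+q^{d(m'-1)}}{1+q^d+\cdots+q^{d(n'-1)}},
$$
whose right-hand side is a ratio of polynomials, the denominator taking the nonzero value $n'$ at $q=\om$. Letting $q\to\om$ and using $\om^d=1$ gives $m'/n'=m/n$; alternatively, L'Hôpital applied directly to $(q^m-1)/(q^n-1)$ at $q=\om$ gives $m\om^{m-1}/(n\om^{n-1})=(m/n)\om^{m-n}=m/n$ since $d\mid m-n$. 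There is no real obstacle here; the only point requiring care is to keep $q$ away from $1$ (and, in the second case, from $q^d=1$) while carrying out these manipulations, so that rewriting $[m]_q/[n]_q$ as $(q^m-1)/(q^n-1)$ is legitimate before one passes to the limit.
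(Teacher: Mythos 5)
Your proof is correct and is in substance the same as the paper's: the non-divisible case reduces to the observation that $\om^m=\om^n\neq1$ (the paper phrases this as $[m]_\om=[n]_\om=[r]_\om\neq0$ via $1+\om+\cdots+\om^{d-1}=0$), and in the divisible case your factorization $q^{dj}-1=(q^d-1)(1+q^d+\cdots+q^{d(j-1)})$ is exactly the paper's identity $[jd]_q=[d]_q(1+q^d+\cdots+q^{(j-1)d})$ after dividing by $q-1$. Passing through the closed form $(q^m-1)/(q^n-1)$ rather than the open geometric sum, and the L'H\^opital remark, are only cosmetic variations.
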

\begin{proof}
Let $m\Cong n\Cong r\ (\Mod d)$ where $0\le r<d$.  Since $1+\om+\om^2+\cdots+\om^{d-1}=0$, cancellation in~\ree{[n]} yields
$$
[m]_\om=1+\om+\om^2+\cdots+\om^{r-1}=[n]_\om.
$$
So if $r\neq0$ then $[n]_\om\neq0$ and $[m]_\om/[n]_\om=1$, proving the ``otherwise'' case.

If $r=0$ then we can write $n=\ell d$ and $m=kd$ for certain nonnegative integers $k,\ell$. It follows that
$$
\frac{[m]_q}{[n]_q}
=\frac{(1+q+q^2+\cdots+q^{d-1})(1+q^d+q^{2d}+\cdots+q^{(k-1)d})}
{(1+q+q^2+\cdots+q^{d-1})(1+q^d+q^{2d}+\cdots+q^{(\ell-1)d})}.
$$
Canceling the $1+q+q^2+\cdots q^{d-1}$ factors and plugging in $\om$ gives
$$
\lim_{q\ra\om}\frac{[m]_q}{[n]_q}=\frac{k}{\ell}=\frac{m}{n}
$$
as desired.
\end{proof}

To motivate the hypothesis of the next result, note that if $o(g)=d$ and 
$g\in C_n$ then $d|n$ by Lagrange's Theorem.
\bco
\label{gau}
If $\om=\om_d$ and $d|n$, then
$$
\gau{n+k-1}{k}_\om=
\case{\dil {n/d+k/d-1\choose k/d}}
{if $d|k$,}
{0\rule{0pt}{20pt}}
{otherwise.}
$$
\eco
\begin{proof}
In the equality above, consider the numerator and denominator of the left-hand side after canceling factorials. Since $d|n$, the product $[n]_\om[n+1]_\om\cdots[n+k-1]_\om$ starts with a zero factor and has every $d$th factor after that also equal to zero, with all the other factors being nonzero.  The product $[1]_\om[2]_\om\cdots[k]_\om$ is also of period $d$, but starting with $d-1$ nonzero factors.  It follows that the number of zero factors in the numerator is always greater than or equal to the number of zero factors in the denominator with equality if and only if $d|k$.  This implies the ``otherwise'' case.

If $d|k$, then using the previous lemma
\bea
\dil \gau{n+k-1}{k}_\om
&=&\lim_{q\ra\om}\left(\frac{[n]_q}{[k]_q} \cdot \frac{[n+1]_q}{[1]_q}\cdot
\frac{[n+2]_q}{[2]_q}\cdots\frac{[n+k-1]_q}{[k-1]_q}\right)\\[5pt]
&=&\frac{n}{k}\cdot 1 \cdots 1 \cdot \frac{n+d}{d}\cdot 1\cdots 
1\cdot\frac{n+2d}{2d}\cdot 1 \cdots\\[5pt]
&=&\frac{n/d}{k/d}\cdot\frac{n/d+1}{1}\cdot\frac{n/d+2}{2}\cdots\\[5pt]
&=&\dil {n/d+k/d-1\choose k/d}
\eea
as desired.
\end{proof}

Comparing Corollary~\ref{X^g:cor} and Corollary~\ref{gau}, we immediately have a proof of Theorem~\ref{multiset}.

\section{Representation theory background and another proof}
\label{rtb}

Although the proof just given of Theorem~\ref{multiset} has the advantage of being elementary, it does not give much intuition about why the equality~\ree{csp} holds.  Proofs of such results using representation theory are more sophisticated but also provide more insight.  We begin this section by reviewing just enough about representations to provide another demonstration of Theorem~\ref{multiset}.  Readers interested in  more information about representation theory, especially as it relates to symmetric groups, can consult the texts of James~\cite{jam:rts}, James and Kerber~\cite{jk:rts}, or Sagan~\cite{sag:sym}.  

Given a set, $X$, we can create a complex vector space, $V=\bbC X$, by considering the elements of $X$ as a basis and taking formal linear combinations.  So if $X=\{s_1,s_2,\ldots,s_k\}$ then
$$
\bbC X = \{c_1\bs_1+c_2\bs_2+\cdots+c_k\bs_k\ :\ \mbox{$c_i\in\bbC$ for all $i$}\}. 
$$
Note that when an element of $X$ is being considered as a vector, it is set in boldface type.  If $G$ is a group acting on $X$, then $G$ also acts on $\bbC X$ by linear extension.  Each element $g\in G$ corresponds to an invertible linear map $[g]$.  (Although this is the same notation as for $[n]$ with $n\in\bbN$, context should make it clear which is meant.)  If $B$ is an ordered basis for $V$ then we let $[g]_B$ denote the matrix of $[g]$ in the basis $B$.  In particular, $[g]_X$ is the permutation matrix for $g$ acting on $X$.

To illustrate these concepts, if $X=\{1,2,3\}$ then
$$
\bbC X = \{c_1\1+c_2\2+c_3\3\ :\ c_1, c_2, c_3\in\bbC\}.
$$
The group $G=\spn{(1,2,3)}$ acts on $X$ and so on $\bbC X$.  For $g=(1,2,3)$ and the basis $X$ the action is
$$
(1,2,3)\1=\2,\ (1,2,3)\2=\3,\ (1,2,3)\3=\1.
$$
Putting this in matrix form gives
\beq
\label{[(1,2,3)]}
[(1,2,3)]_X=
\left[
\barr{ccc}
0&0&1\\
1&0&0\\
0&1&0
\earr
\right].
\eeq

In general, a {\it module for $G$\/} or {\it $G$-module\/} is a vector space
$V$ over $\bbC$ where $G$ acts by invertible linear transformations.  
Most of our modules will be {\it left modules\/} with $G$ acting on the left.
Consider the {\it general linear group\/} 
$\GL(V)$ of invertible linear transformations of $V$.  If $V$ is
a $G$-module then the map $\rho:G\ra\GL(V)$ given by $g\mapsto[g]$ is called a
{\it representation\/} of $G$.   Equivalently, if $V$ is a vector space, then
a representation is a group homomorphism $\rho:G\ra\GL(V)$.  If $G$ acts on a
set $X$ then the space $\bbC X$ is called the {\it permutation module\/}
corresponding to $X$. 

Given a $G$-module, $V$, the {\it character\/} of $G$ on $V$ is the function $\chi:G\ra\bbC$ given by
$$
\chi(g)=\tr[g]
$$
where $\tr$ is the trace function.  Note that $\chi$ is well defined since the
trace of a linear transform is independent of the basis in which it is
computed.  We can now make a connection with the left-hand side of~\ree{csp}.
If a group $G$ acts on a set $X$, then the character of $G$ on $\bbC X$ is
given by
\beq
\label{chi1}
\chi(g)=\tr[g]_X=\# X^g
\eeq
since $[g]_X$ is just the permutation matrix for $g$'s action.

To see how the right side of~\ree{csp} enters in this context, write $f(q)=\sum_{i=0}^l m_i q^i$ where $m_i\in\bbN$ for all $i$.  Now suppose there is another basis, $B$, for $\bbC X$ with the property that every $g\in C$ is represented by a diagonal matrix of the form
\beq
\label{[g]_B}
[g]_B=\dia(\underbrace{1,\ldots,1}_{m_0},\underbrace{\om,\ldots,\om}_{m_1},\ldots,
\underbrace{\om^l,\ldots,\om^l}_{m_l})
\eeq
where $\om=\om_{o(g)}$.  (This may seem like a very strong assumption, but in
the next section we will see that it must hold.)  Computing the character in this basis gives
\beq
\label{chi2}
\chi(g)=\tr[g]_B = \sum_{i=0}^l m_i \om^i = f(\om).
\eeq
Comparing~\ree{chi1} and~\ree{chi2} we immediately get the CSP.  So cyclic sieving can merely amount to basis change in a $C$-module.

Sometimes it is better to use a $C$-module other than $\bbC X$ to obtain the right-hand side of~\ree{csp}.  Two $G$-modules $V,W$ are {\it $G$-isomorphic\/} or {\it $G$-equivalent\/}, written $V\iso W$, if there is a linear bijection $\phi:V\ra W$ which preserves the action of $G$, i.e., for every $g\in G$ and $\bv\in V$ we have $\phi(g\bv)=g\phi(\bv)$.  The prefix ``$G$-'' can be omitted if the group is understood from context.  To obtain $f(\om)$ as a character, any module isomorphic to $\bbC X$ will do.  So we may pick a new module with extra structure which will be useful in the proof.

We are now ready to set up the tools we will need to reprove
Theorem~\ref{multiset}.  Let $V^{\otimes k}$ denote the $k$-fold tensor
product of the vector space $V$.  If  we take $V=\bbC[n]$ which has basis
$B=\{\bi\ :\ 1\le i\le n\}$, then $V^{\otimes k}$ has a basis of the form
$$
\{\bi_1\otimes\bi_2\otimes\cdots\otimes \bi_k\ :\ \mbox{$\bi_j\in B$ for $1\le  j\le k$}\}.
$$ 
In general, for any $V$, every basis $B$ of $V$ gives rise to a basis of
$V^{\otimes k}$ consisting of $k$-fold tensors of elements of $B$.

When $V=\bbC[n]$, we consider 
the space of {\it $k$-fold symmetric tensors\/}, $\Sym_k(n)$, which is the
quotient of $V^{\otimes k}$ by the subspace generated by  
\beq
\label{Sk}
\bi_1\otimes\bi_2\otimes\cdots\otimes \bi_k\ -\ \bi_{g(1)}\otimes \bi_{g(2)}\otimes\cdots\otimes \bi_{g(k)}
\eeq
for all $g\in\fS_k$ and all tensors $\bi_1\otimes\bi_2\otimes\cdots\otimes
\bi_k$.  Note that, while we are quotienting by such differences for all
tensors, it would suffice (by linearity) to just consider the differences
obtained using $k$-fold tensors from some basis.
Let $\bi_1\bi_2\cdots\bi_k$ denote the equivalence class of
$\bi_1\otimes\bi_2\otimes\cdots\otimes \bi_k$.  These classes are indexed by
the $k$-element multisets on $[n]$ and form a basis for $\Sym_k(n)$.  So, for
example, 
$$
\Sym_2(3)=\{c_1\1\1+c_2\2\2+c_3\3\3+c_4\1\2+c_5\1\3+c_6\2\3\ :\
\mbox{$c_i\in\bbC$ for all $i$}\}.
$$

The cyclic group $C_n=\spn{(1,2,\ldots,n)}$ acts on $\bbC[n]$ and so there is an induced action on $\Sym_k(n)$ given by
\beq
\label{Sn}
g(\bi_1\bi_2\cdots\bi_k)=g(\bi_1) g(\bi_2)\cdots g(\bi_k).
\eeq
Note that when defining $\Sym_k(n)$ by~\ree{Sk}, one has $\fS_k$ acting on the
subscripts to permute the places of the vectors.  In contrast, the action
in~\ree{Sn} has $\fS_n$ acting on the basis elements themselves.  
Comparing~\ree{Sn} with~\ree{gM}, we see that $\Sym_k(n)\iso\bbC
(\hs{-2pt}{[n]\choose k}\hs{-2pt})$ as $C_n$-modules.  We will work in the
former module for our proof.   

If $g\in C_n$ then let $[g]$ and $[g]'$ denote the associated linear
transformations on $\bbC[n]$ and $\Sym_k(n)$, respectively.  By way of
illustration, when $n=3$ and $k=2$ we calculated the matrix
$[(1,2,3)]_{\{\1,\2,\3\}}$ in~\ree{[(1,2,3)]}.  On the other hand, the
table~\ree{(1,2,3)} becomes 
$$
[(1,2,3)]'_{\{\1\1,\2\2,\3\3,\1\2,\1\3,\2\3\}}
=
\left[
\barr{cccccc}
0&0&1&0&0&0\\
1&0&0&0&0&0\\
0&1&0&0&0&0\\
0&0&0&0&1&0\\
0&0&0&0&0&1\\
0&0&0&1&0&0
\earr
\right].
$$
Similarly, let $\chi$ and $\chi'$ be the respective characters of $C_n$ acting on $\bbC[n]$ and $\Sym_k(n)$.

Now suppose we can find a basis $B=\{\bb_1,\bb_2,\ldots,\bb_ n\}$ for $\bbC[n]$ which diagonalizes $[g]$, say
$$
[g]_B=\dia(x_1,x_2,\ldots,x_n).
$$
Since $B$ is a basis for $\bbC[n]$,
$$
B'=\{\bb_{i_1}\bb_{i_2}\cdots\bb_{i_k}\ :\ 1\le i_1\le i_2\le\ldots\le i_k\le n\}
$$
is a basis for $\Sym_k(n)$.  This is the crucial property of the space of symmetric tensors which makes us choose to work with them rather than in the original permutation module.  Since each element of $B$ is an eigenvector for $[g]$ acting on $\bbC[n]$, the same is true for $B'$ and $[g]'$ acting on $\Sym_k(n)$.  More precisely,
$$
g(\bb_{i_1}\bb_{i_2}\cdots\bb_{i_k})=g(\bb_{i_1}) g(\bb_{i_2})\cdots g(\bb_{i_k}) 
= x_{i_1}x_{i_2}\cdots x_{i_k} \bb_{i_1}\bb_{i_2}\cdots\bb_{i_k}.
$$
 It follows that
$$
[g]'_{B'}=\dia(x_{i_1}x_{i_2}\cdots x_{i_k}\ :\  1\le i_1\le i_2\le\ldots\le i_k\le n)
$$
and
\beq
\label{chi'}
\chi'(g)=\sum_{1\le i_1\le i_2\le\ldots\le i_k\le n} x_{i_1}x_{i_2}\cdots x_{i_k}.
\eeq
To illustrate, if $n=3$ and $[g]_{\ba,\bb,\bc}=\dia(x_1,x_2,x_3)$, then in $\Sym_2(3)$ we have
$$
\barr{lll}
g(\ba\ba)=x_1^2 \ba\ba, & g(\bb\bb)=x_2^2 \bb\bb, & g(\bc\bc)=x_3^2 \bc\bc,\\
g(\ba\bb)=x_1x_2\ba\bb, & g(\ba\bc)=x_1x_3\ba\bc, & g(\bb\bc)=x_2x_3\bb\bc,
\earr
$$
so that
\beq
\label{chi'ex}
\chi'(g)=x_1^2+x_2^2+x_3^2+x_1x_2+x_1x_3+x_2x_3.
\eeq

The expression on the right-hand side of~\ree{chi'} is called a {\it complete
  homogeneous symmetric polynomial} and denoted $h_k(x_1,x_2,\ldots,x_n)$.  It
is called ``complete homogeneous'' because it is  the sum of all monomials of
degree $k$ in the $x_i$.  It is a symmetric polynomial because it is invariant
under permutation of the subscripts of the variables.  The theory of symmetric
polynomials is intimately bound up with the representations of the symmetric and
general linear groups.  Equation~\ree{chi'ex} displays $h_2(x_1,x_2,x_3)$.  To
make use of~\ree{chi'}, we need to related complete homogeneous symmetric
functions to $q$-binomial coefficients.  This is done by taking the {\it
  principal specialization\/} which sets $x_i=q^{i-1}$ for all $i$. 
\begin{lemma}
\label{hk:lem}
For $n\ge1$ and $k\ge0$ we have
\beq
\label{hk}
h_k(1,q,q^2,\ldots,q^{n-1})=\gau{n+k-1}{k}_q.
\eeq
\end{lemma}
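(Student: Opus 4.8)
The plan is to prove \ree{hk} by a short induction that reduces it to two standard recurrences: the recursion for complete homogeneous symmetric polynomials, and the $q$-Pascal recurrence for Gaussian polynomials. First I would record the symmetric-function recursion. Splitting the degree-$k$ monomials in $x_1,\dots,x_n$ according to whether or not the variable $x_n$ occurs gives
$$h_k(x_1,\dots,x_n)=h_k(x_1,\dots,x_{n-1})+x_n\,h_{k-1}(x_1,\dots,x_n),$$
since the first term collects the monomials not using $x_n$, while dividing a monomial that does use $x_n$ by one copy of $x_n$ accounts for the second. Specializing $x_i=q^{i-1}$ turns this into
$$h_k(1,q,\dots,q^{n-1})=h_k(1,q,\dots,q^{n-2})+q^{n-1}\,h_{k-1}(1,q,\dots,q^{n-1}).$$

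Next I would recall the $q$-Pascal recurrence. From the definition \ree{gaunk}, a direct manipulation of $q$-factorials yields $\gau{m}{k}_q=\gau{m-1}{k}_q+q^{m-k}\gau{m-1}{k-1}_q$. Taking $m=n+k-1$, so that $m-k=n-1$, this reads $\gau{n+k-1}{k}_q=\gau{n+k-2}{k}_q+q^{n-1}\gau{n+k-2}{k-1}_q$.

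Then I would induct on $n+k$. For the base cases: $h_0=1=\gau{n-1}{0}_q$ for every $n\ge1$, and $h_k(1)=1^k=1=\gau{k}{k}_q$ for every $k\ge0$, so \ree{hk} holds whenever $k=0$ or $n=1$. For $n\ge2$ and $k\ge1$, the inductive hypothesis applied to the pairs $(n-1,k)$ and $(n,k-1)$ (both of smaller total) rewrites the right-hand side of the $h_k$-recursion as $\gau{n+k-2}{k}_q+q^{n-1}\gau{n+k-2}{k-1}_q$, and this equals $\gau{n+k-1}{k}_q$ by the $q$-Pascal identity above. This finishes the argument.

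There is no real obstacle; the one point requiring care is bookkeeping with exponents — one must use the form of $q$-Pascal carrying the factor $q^{m-k}$ on the $\gau{m-1}{k-1}_q$ term (not the other standard form, with $q^k$ on $\gau{m-1}{k}_q$), so that $q^{m-k}=q^{n-1}$ lines up with the $x_n=q^{n-1}$ produced by the symmetric-function recursion. Alternatively, and perhaps more cleanly for a survey, one can argue via generating functions: $\sum_{k\ge0}h_k(x_1,\dots,x_n)\,t^k=\prod_{i=1}^n(1-x_it)^{-1}$ specializes under $x_i=q^{i-1}$ to $\prod_{i=0}^{n-1}(1-q^it)^{-1}=\sum_{k\ge0}\gau{n+k-1}{k}_q t^k$ by the $q$-binomial theorem, and \ree{hk} comes out as the coefficient of $t^k$. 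I would present the inductive argument as the primary one so as to keep the proof self-contained.
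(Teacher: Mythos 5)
Your proof is correct and follows essentially the same route as the paper: the same splitting of $h_k(x_1,\ldots,x_n)$ by occurrence of $x_n$, the same specialization $x_i=q^{i-1}$, the same $q$-Pascal recurrence with $n$ replaced by $n+k-1$, and the same base cases in a double induction. The generating-function alternative you mention is a nice aside but not needed.
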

\begin{proof}
We do a double induction on $n$  and $k$.  For $n=1$ we have
$h_k(1)=x_1^k|_{x_1=1}=1$ and  $\left[k\atop k\right]_q=1$.  For $k=0$ it is
also easy to see that both sides are $1$. 

Assume that $n\ge2$ and $k\ge1$.  By splitting the sum for $h_k(x_1,x_2,\ldots,x_n)$ into those terms which do not contain $x_n$ and those which do, we obtain the recursion
$$
h_k(x_1,x_2,\ldots,x_n)=h_k(x_1,x_2,\ldots,x_{n-1})+x_n h_{k-1}(x_1,x_2,\ldots,x_n).
$$
Specializing yields
$$
h_k(1,q,q^2,\ldots,q^{n-1})=h_k(1,q,q^2,\ldots,q^{n-2})+q^{n-1} h_{k-1}(1,q,q^2,\ldots,q^{n-1}).
$$

Using the definition of the Gaussian polynomials in terms of $q$-factorials~\ree{gaunk},
it is easy to check that
$$
\gau{n}{k}_q=\gau{n-1}{k}_q + q^{n-k}\gau{n-1}{k-1}_q.
$$
Substituting $n+k-1$ for $n$, we see that
the right-hand side of~\ree{hk}
satisfies the same recursion as the left-hand side, which completes the proof. 
\end{proof}
 
\begin{proof} \hs{-10pt} {\bf (of Theorem~\ref{multiset})} \hs{10pt} 
Recall that  $[(1,2,\ldots,n)]$ is a linear transformation from $\bbC[n]$ to
itself.  The map has characteristic polynomial $x^n-1$ with roots 
$1,\om_n,\om_n^2,\ldots,\om_n^{n-1}$.  Since these roots are distinct, there exists a diagonalizing basis $B$ of $\bbC[n]$ with
$$
[(1,2,\ldots,n)]_B=\dia(1,\om_n,\om_n^2,\ldots,\om_n^{n-1}).
$$
Now any $g\in C_n$ is of the form $g=(1,2,\ldots,n)^i$ for some $i$ and so, since we have a diagonal representation,
$$
[g]_B=\dia(1^i,\om_n^i,\om_n^{2i},\ldots,\om_n^{(n-1)i})
=\dia(1,\om,\om^2,\ldots,\om^{n-1})
$$
where $\om=\om_n^i$ is a primitive $o(g)$-th root of unity.  The discussion leading up to equation~\ree{chi'} and the previous lemma yield
$$
\chi'(g)=h_k(1,\om,\om^2,\ldots,\om^{n-1})=\gau{n+k-1}{k}_\om.
$$
As we have already noted, $\Sym_k(n)\iso\bbC(\hs{-2pt}{[n]\choose k}\hs{-2pt})$, and so by~\ree{chi1}
$$
\chi'(g)=\#\left(\hs{-3pt}{[n]\choose k}\hs{-3pt}\right)^g.
$$
Comparing the last two equations completes the proof that the CSP holds.
\end{proof}

\section{A representation theory paradigm}
\label{rtp}

We promised to show that the assumption of $[g]$ having a diagonalization of the form~\ree{[g]_B} is not a stretch.  To do that, we need to develop some more representation theory which will also lead to a paradigm for proving the CSP.

A {\it submodule\/} of a $G$-module $V$ is a subspace $W$ which is left
invariant under the action of $G$ in that $g\bw\in W$ for all $g\in G$ and
$\bw\in W$.  The zero subspace and $V$ itself are the {\it trivial
  submodules\/}.  We say that $V$ is {\it reducible\/} if it has a nontrivial
submodule and {\it irreducible\/} otherwise.  For example, the $\fS_3$-module
$\bbC[3]$ is reducible because the 1-dimensional subspace generated by the
vector $\1+\2+\3$ is a nontrivial submodule.  It turns out that the
irreducible modules are the building blocks of all other modules in our
setting.  The next result collects together three standard results from representation
theory.  They can be found along with their proofs in Proposition 1.10.1,
Theorem 1.5.3, and Corollary 1.9.4 (respectively) of~\cite{sag:sym}.
\bth
\label{G}
Let $G$ be a finite group and consider $G$-modules which are vector spaces over $\bbC$.
\ben
\item[(a)]  The number of pairwise inequivalent irreducible $G$-modules is finite and equals  the number of conjugacy classes of $G$.
\item[(b)] (Maschke's Theorem)  Every $G$-module can be written as a direct sum of irreducible $G$-modules.
\item[(c)] Two $G$-modules are equivalent if and only if they have the same character.\hqed
\een
\eth
We note that if $G$ is not finite or if the ground field for our $G$-modules
is not $\bbC$ then the analogue of this theorem may not hold. 
Also, the forward direction of (c) is trivial (we have already been using it in the last section), while the backward direction is somewhat surprising in that one can completely characterize a $G$-module through the trace alone.

Let us construct the irreducible representations of a cyclic group $C$ with $\#C=n$.  The {\it dimension\/} of a $G$-module $V$ is its usual vector space dimension.  If $\dim V=1$ then $V$ must be irreducible.  So what do the dimension one modules for $C$ look like?  Let $g$ be a generator of $C$ and let $V=\bbC\{\bv\}$ for some vector $\bv$.  Then $g\bv=c\bv$ for some scalar $c$.  Furthermore
$$
\bv=e\bv=g^n\bv=c^n\bv.
$$
So $c^n=1$ and $c$ is an $n$th root of unity.  It is easy to verify that for
each $n$th root of unity $\om$, the map $\rho(g^j)=[\om^j]$ defines a
representation of $C$ as $j$ varies over the integers.  So we have found $n$
irreducible $G$-modules of $C$, one for each $n$th root of unity:
$V^{(0)},V^{(1)},\ldots,V^{(n-1)}$.  They clearly have different characters
(the trace of a 1-dimensional matrix being itself) and so are pairwise
inequivalent.  Finally, $C$ is Abelian and so has $\#C=n$ conjugacy classes.
Thus by (a) of Theorem~\ref{G}, we have found all the irreducible representations. 

Now given any $C$-module, $V$, part (b) of Theorem~\ref{G} says we have a
module isomorphism 
$$
V\iso\bigoplus_{i=0}^{n-1} a_i V^{(i)}
$$
where $a_i V^{(i)}$ denotes a direct sum of $a_i$ copies of $V^{(i)}$.  Since each of the summands is 1-dimensional, there is a basis $B$ simultaneously diagonalizing the linear transformations $[g]$ for all $g\in C$ as in~\ree{[g]_B}.  We can also explain the multiplicities as follows.  Extend the definition of $V^{(i)}$ to any $i\in\bbN$ by letting $V^{(i)}=V^{(j)}$ if 
$i\Cong j\ (\Mod n)$.  Now given any polynomial $f(q)=\sum_{i\ge0} m_i q^i$ with nonnegative integer coefficients, define a corresponding $C$-module
\beq
\label{V_f}
V_f=\bigoplus_{i\ge0} m_i V^{(i)}.
\eeq
Reiner, Stanton, and White identified the following representation theory paradigm for proving that the CSP holds.
\bth
\label{csp:thm}
The cyclic sieving property holds for the  triple $(X,C,f(q))$ if and only if one has $\bbC X\iso V_f$ as $C$-modules.
\eth
\begin{proof}
Note that $\# X^g$ and $f(\om_{o(g)})$ are the character values of $g\in C$ in the modules $\bbC X$ and $V_f$, respectively.  So the result now follows from Theorem~\ref{G} (c). 
\end{proof}

\section{Coxeter groups and permutation statistics}
\label{cgp}

We will now present some basic definitions and results about Coxeter groups which will be needed for later use.  The interested reader can find more information in the books of Bj\"orner and Brenti~\cite{bb:ccg} or Hiller~\cite{hil:gcg}.   A {\it finite Coxeter group\/}, $W$, is a finite group having a presentation with a set of generators $S$, and relations for each pair $s,s'\in S$ of the form
$$
(ss')^{m(s,s')}=e,
$$
where the $m(s,s')$ are positive integers satisfying
$$
\barr{cc}
m(s,s')=m(s',s);\\
m(s,s')=1 \iff s=s'.
\earr
$$
One can also define infinite Coxeter groups, but we will only need the finite
case and may drop ``finite'' as being understood in what follows.  An abstract
group $W$  may have many presentations of this form, so when we talk about a
Coxeter group we usually have a specific generating set $S$ in mind which is 
tacitly understood.  If we wish to be explicit about the generating set, then
we will refer to the {\it Coxeter system $(W,S)$\/}. Note that since
$m(s,s)=1$ we have $s^2=1$ and so the elements of $S$ are involutions.  It
follows that one can rewrite $(ss')^{m(s,s')}=e$ by bringing half the factors
to the right-hand side  
$$
\underbrace{s\ s'\ s\ s'\ s\ \cdots}_{m(s,s')}=\underbrace{s'\ s\ s'\ s\ s'\ \cdots}_{m(s,s')}.
$$

Probably the most famous Coxeter group is the symmetric group, $\fS_n$.  Here we take the generating set of adjacent transpositions $S=\{s_1,s_2,\ldots,s_{n-1}\}$ where $s_i=(i,i+1)$.  The Coxeter relations take the form
$$
\barr{l}
s_i^2=1,\\
\mbox{$s_is_j=s_js_i$ for $|i-j|\ge2$,}\\
\mbox{$s_is_{i+1}s_i=s_{i+1}s_is_{i+1}$ for $1\le i\le n-1$.}
\earr
$$
The third equation is called the {\it braid  relation\/}.  (Other
authors also include the second equation and distinguish the two by using the terms
{\it long\/} and {\it short\/} braid relations).   We will often refer
back to this example to illustrate Coxeter group concepts. 

A Coxeter group, $W$, is {\it irreducible\/} if it can not be written as a nontrivial product of two other Coxeter groups.  Irreducible finite Coxeter groups were classified by Coxeter~\cite{cox:cef}.  A list of these groups is given in Table~\ref{Cox}.  The {\it rank\/} of a Coxeter group, $\rk W$, is the minimum cardinality of a generating set $S$ and the subscript in each group name gives the rank.  If $S$ has this minimum cardinality then its elements are called {\it simple\/}. 
The middle column displays the {\it Coxeter graph\/} or {\it
  Dynkin diagram\/} of the group which has the set $S$ as its vertices with an
edge labeled $m(s,s')$ between vertices $s\neq s'$.  By convention, if
$m(s,s')=2$ (i.e., $s$  and $s'$  commute) then one omits the edge, and if
$m(s,s')=3$ then the edge is displayed without a label. 
A Coxeter group can be realized as the symmetry group of a regular polytope if
and only if its graph contains only vertices of degree one and two.  So all
these groups except $D_n$, $E_6$, $E_7$, and $E_8$ have corresponding
polytopes which are listed in the last column.

\begin{table}
\setlength{\unitlength}{1.5pt}
\begin{tabular}{c|c|c}
\mbox{Group}&\hs{20pt}\mbox{Diagram}\hs{20pt}&\mbox{Polytope}\\
\hline
$A_n$
&
\bpi(70,10)(-5,-2)
\Gaa\Gba\Gca    \Gfa\Gga
\Gaaba \Gbaca \Gfaga
\multiput(20,0)(5,0){6}{\line(1,0){4}}
\epi
&
symmetric group $\fS_{n+1}$,\\
&
&
group of the simplex\\
$B_n$
&
\bpi(70,10)(-5,-2)
\Gaa\Gba   \Gea \Gfa\Gga
\Gaaba \Geafa \Gfaga
\put(55,5){\makebox(0,0){$4$}}
\multiput(10,0)(5,0){6}{\line(1,0){4}}
\epi
&
hyperoctahedral group,\\
&
&
group of the cube/octahedron\\
$D_n$
&
\bpi(70,20)(-5,-2)
\Gaa\Gba   \Gea \Gfa\Ggb
\put(60,-10){\circle*{3}}
\Gaaba \Geafa \Gfagb
\put(50,0){\line(1,-1){10}}
\multiput(10,0)(5,0){6}{\line(1,0){4}}
\epi
&
-\\
$E_6$
&
\bpi(70,20)(-5,-2)
\Gaa\Gba\Gca\Gda\Gea \Gcb
\Gaaba \Gbaca \Gcada \Gdaea \Gcacb
\epi
&
-\\
$E_7$
&
\bpi(70,20)(-5,-2)
\Gaa\Gba\Gca\Gda\Gea\Gfa \Gcb
\Gaaba \Gbaca \Gcada \Gdaea\Geafa \Gcacb
\epi
&
-\\
$E_8$
&
\bpi(70,20)(-5,-2)
\Gaa\Gba\Gca\Gda\Gea\Gfa\Gga \Gcb
\Gaaba \Gbaca \Gcada \Gdaea\Geafa\Gfaga \Gcacb
\epi
&
-\\
$F_4$
&
\bpi(70,10)(-5,-2)
\Gaa\Gba\Gca\Gda
\Gaaba \Gbaca \Gcada
\put(15,5){\makebox(0,0){$4$}}
\epi
&
group of the $24$-cell\\
$H_3$
&
\bpi(70,10)(-5,-2)
\Gaa\Gba\Gca
\Gaaba \Gbaca
\put(15,5){\makebox(0,0){$5$}}
\epi
&
group of the dodecahedron/icosahedron\\
$H_4$
&
\bpi(70,10)(-5,-2)
\Gaa\Gba\Gca\Gda
\Gaaba \Gbaca \Gcada
\put(25,5){\makebox(0,0){$5$}}
\epi
&
group of the $120$-cell/$600$-cell\\
$I_2(m)$
&
\bpi(70,10)(-5,-2)
\Gaa\Gba
\Gaaba 
\put(5,5){\makebox(0,0){$m$}}
\epi
&
group of the $m$-gon
\end{tabular}
\caption{\label{Cox}The irreducible finite Coxeter groups}
\end{table}

There is an important function on Coxeter groups which we will need to define generating functions for instances of the CSP.  
Given $w\in W$ we can write $w=s_1s_2\cdots s_k$ where the $s_i\in S$.  
Note that here $s_i$ is just an element of $S$ and
not necessarily the $i$th generator.  Such an
expression is {\it reduced\/} if $k$ is minimal among all such expressions for
$w$ and this value of $k$ is called the {\it length\/} of $w$, written
$\ell(w)=k$.  When $W$ is of type $A_{n-1}$, i.e., the symmetric group
$\fS_n$, then there is a nice combinatorial interpretation of the length
function.  Write $w$ in {\it one-line notation\/} as $w=w_1w_2\ldots w_n$
where $w_i=w(i)$ for $i\in[n]$.   The set of {\it inversions\/} of $w$ is 
$$
\Inv w=\{(i,j)\ :\ \mbox{$i<j$ and $w_i>w_j$}\}. 
$$  
So $\Inv w$ records the places in $w$ where there is a pair of out-of-order elements.  The {\it inversion number\/} of $w$ is $\inv w=\#\Inv w$.  For example, if 
\beq
\label{w}
w=w_1w_2w_3w_4w_5=31524
\eeq
then $\Inv w=\{(1,2),(1,4),(3,4),(3,5)\}$ and $\inv w =4$.  It turns out that for type $A$,
\beq
\label{ell}
\ell(w)=\inv w.
\eeq

We can now make a connection with the $q$-binomial coefficients as follows.  Given a Coxeter system $(W,S)$ and $J\subset S$, there is a corresponding {\it parabolic subgroup\/} $W_J$ which is the subgroup of $W$ generated by $J$.  It can be shown that each coset $w W_J$ has a unique representative of minimal length.  Let $W^J$ be the set of these coset representatives and set
\beq
\label{W^J}
W^J(q)=\sum_{w\in W^J} q^{\ell(w)}.
\eeq

If $W=\fS_n$ with $S=\{s_1,s_2,\ldots,s_{n-1}\}$ as before, then remove $s_k$ from $S$ to obtain $J=S\setminus\{s_k\}$ (which generates a {\it maximal parabolic subgroup\/}).  So 
$$
(\fS_n)_J\iso\fS_k\times\fS_{n-k}
$$ 
consists of all permutations permuting the sets $\{1,2,\ldots,k\}$ and
$\{k+1,k+2,\ldots,n\}$ among themselves.  Thus multiplying $w\in\fS_n$ on the
right by an element of $(\fS_n)_J$ merely permutes $\{w_1,w_2,\ldots,w_k\}$
and $\{w_{k+1},w_{k+2},\ldots,w_n\}$ among themselves.  (We compose
permutations from right to left.)   Using~\ree{ell}, we see that the set of
minimal length coset representatives is 
\beq
\label{S_n^J}
(\fS_n)^J=\{w\in\fS_n\ :\ \mbox{$w_1<w_2<\ldots<w_k$ and $w_{k+1}<w_{k+2}<\ldots<w_n$}\}.
\eeq
A straightforward double induction on $n$ and $k$, much like the one used to prove Lemma~\ref{hk:lem}, now yields the following result.
\bpr
\label{W^J(q)}
For $W=\fS_n$ and $J=S\setm\{s_k\}$ we have
$$
W^J(q)=\gau{n}{k}_q
$$
for any $0\le k\le n$ (where for $k=0$ or $n$, $J=S$).\hqed
\epr 

As has already been mentioned, the symmetry groups of regular polytopes only
yield some of the Coxeter groups.  However, there is a geometric way to get
them all.  A {\it reflection\/} in $\bbR^n$ is a linear transformation $r_H$
which fixes a hyperplane $H$ pointwise and sends a vector perpendicular $H$ to
its negative.    A {\it real reflection group\/} is a group generated by
reflections.  It turns out that the finite real reflection groups exactly
coincide with the finite Coxeter groups; see the papers of
Coxeter~\cite{cox:dgg,cox:cef}.  
(A similar result holds in the
infinite case if one relaxes the definition of a reflection.)   Definitions
for Coxeter groups are also applied to the corresponding reflection group,
e.g., a {\it simple reflection\/} is one corresponding to an element of $S$.
The text of Benson and Grove~\cite{bg:frg} gives a nice introduction to finite
reflection groups.  For example, to get $\fS_n$ one can use the reflecting
hyperplanes $H_{i,j}$ with equation $x_i=x_j$.  If $r_{i,j}$ is the
corresponding reflection then $r_{i,j}(x_1,x_2,\ldots,x_n)$ is just the point
obtained by interchanging the $i$th and $j$th coordinates and so corresponds
to the transposition $(i,j)\in\fS_n$.  
  
We end this section by discussing permutation statistics which are intimately
connected with Coxeter groups as we have seen with the statistic $\inv$.  A
{\it statistic\/} on a finite set $X$ is a function $\sta:X\ra\bbN$.  The
statistic has a corresponding {\it weight generating function\/}
$$
f^{\sta}(X) =f^{\sta}(X;q)=\sum_{y\in X} q^{\sta y}.
$$
From Table~\ref{st} we see that on $X=\fS_3$
$$
f^{\inv}(\fS_3) = \sum_{w\in\fS_3} \inv w = 1 + 2q + 2q^2 + q^3 = (1+q)(1+q+q^2)=[3]_q!
$$
In fact, this holds for any $n$ (not just $3$); see Proposition~\ref{invmaj} below.
And the reader should compare this result with Proposition~\ref{W^J(q)} which gives the
generating function for $\inv$ over another set of permutations. 

\begin{table}
$$
\barr{ccccccc}
w         & 123 & 132 & 213 & 231 & 312 & 321\\
\inv w  &   0   &   1   &   1   &   2   &   2   &   3\\
\maj w &   0   &   2   &   1   &   2   &   1   &   3\\
\des w &   0   &   1   &   1   &   1   &   1   &   2\\
\exc w &   0   &   1   &   1   &   2   &   1   &   1
\earr
$$
\caption{\label{st}  Four statistics on $\fS_3$} 
\end{table}

There are three other statistics which will be important in what follows.  The {\it descent set\/} of a permutation $w=w_1w_2\ldots w_n$ is
$$
\Des w=\{i\ :\ w_i>w_{i+1}\}.
$$ 
We let $\des w=\#\Des w$.  Using the descents, one forms the {\it major index\/}
$$
\maj w =\sum_{i\in\Des w} i.
$$
Continuing the example in~\ree{w}, $\Des w=\{1,3\}$ since $w_1>w_2$ and
$w_3>w_4$, and so $\maj w =1+3=4$.
The major index was named for Major Percy MacMahon who introduced the
concept~\cite{mac:ipd} (or see~\cite[pp.\ 508-549]{mac:cp}). 

We say that two statistics $\sta$ and $\sta'$ on $X$ are {\it
  equidistributed\/} if $f^{\sta}(X)=f^{\sta'}(X)$.  In other words, the number of
elements in $X$ with any given $\sta$ value $k$ equals the number having 
$\sta'$ value $k$.  Comparing the first two rows of Table~\ref{st}, the reader
should suspect the following result which is not hard to prove by induction on
$n$. 
\bpr
\label{invmaj}
We have
$$
\sum_{w\in\fS_n} q^{\inv w} = [n]_q! = \sum_{w\in\fS_n} q^{\maj w}.
$$
So $f^{\inv}(\fS_n)=f^{\maj}(\fS_n)$.\hqed
\epr
Any statistic on $\fS_n$ equidistributed with $\inv$ (or $\maj$) is said to be {\it Mahonian\/}, also in tribute to MacMahon.

The last permutation statistic we need comes from the set of {\it excedances\/} which is
$$
\Exc w =\{i\ :\ w(i)>i\}.
$$
One can view excedances as ``descents'' in the cycle decomposition of $w$.  As
usual, we let $\exc w =\#\Exc w$.  In our running example $\Exc w = \{1,3\}$
since $w_1=3>1$ and $w_3=5>3$.  Comparing the distributions of $\des$ and
$\exc$ in Table~\ref{st}, the reader will see a special case of the following
proposition whose proof can be found in the text of Stanley~\cite[Proposition 1.3.12]{sta:ec1}. 
\bpr
\label{desexc}
We have $f^{\des}(\fS_n)=f^{\exc}(\fS_n)$.\hqed
\epr

Any permutation statistic equidistributed with $\des$ (or $\exc$) is said to be {\it Eulerian\/}.  The polynomial in Proposition~\ref{desexc} is called the {\it Eulerian polynomial\/}, $A_n(q)$, although some authors use this term for the polynomial $qA_n(q)$.  The first few Eulerian polynomials are
\bea
A_0(q)&=&1,\\
A_1(q)&=&1,\\
A_2(q)&=&1+q,\\
A_3(q)&=&1+4q+q^2,\\
A_4(q)&=&1+11q+11q^2+q^3,\\
A_5(q)&=&1+26q+66q^2+26q^3+q^4.
\eea
The exponential generating function for these polynomials
\beq
\label{A_n(q)}
\sum_{n\ge0}
A_n(q)\frac{t^n}{n!}
=
\frac{1-q}{e^{t(q-1)}-q}
\eeq
is attributed to Euler~\cite[p.\ 39]{knu:acp3}.

\section{Complex reflection groups and Springer's regular elements}
\label{crg}

Before stating Theorem~\ref{multiset}, we noted that it is a special case of
one part of the first theorem in the Reiner-Stanton-White paper.  To state the
full result, we need another pair of definitions.   Let $g\in\fS_N$ have
$o(g)=n$.  Say that $g$ acts {\it  freely\/} on $[N]$ if all of $g$'s cycles
are of length $n$.  So in this case $n|N$.  For example, $g=(1,2)(3,4)(5,6)$
acts freely on $[6]$.   Say that $g$ acts {\it nearly freely\/} on $[N]$ if
either it acts freely, or all of its cycles are of length $n$ except one which
is a singleton.  In the latter case, $n|N-1$.  So
$g=(1,2)(3,4)(5,6)(7)$ acts nearly freely on $[7]$.
Finally, say that the cyclic group $C$ acts {\it freely\/} or {\it nearly
  freely\/} on $[N]$ if it has a generator $g$ with the corresponding
property.  The next result is Theorem 1.1 in~\cite{rsw:csp}.  In it, 
\beq
\label{()}    
{[N]\choose k}=\{S\ :\ \mbox{$S$ is a $k$-element subset of $[N]$}\}.
\eeq
\bth
\label{rsw:first}
Suppose $C$ is cyclic and acts nearly freely on $[N]$.  The following two triples
\beq
\label{multiset2}
\left(\ \left(\hs{-3pt}{[N]\choose k}\hs{-3pt}\right),\ C,\ \gau{N+k-1}{k}_q\ \right)
\eeq
and
\beq
\label{set}
\left(\ {[N]\choose k},\ C,\ \gau{N}{k}_q\ \right)
\eeq
exhibit the cyclic sieving phenomenon.\hqed
\eth

Note that Theorem~\ref{multiset} is a special case of~\ree{multiset2} since
$C=\spn{(1,2,\ldots,N)}$ acts freely, and so also nearly freely, on $[N]$. 
At this point, the reader may have (at least) two questions in her mind.  One
might be why the multiset example was chosen to explain in detail rather than
the combinatorially simpler set example.  The reason for this is that the
representation theory proof for the latter involves alternating tensors and so
one has to worry about signs which do not occur in the symmetric tensor case.
Another puzzling aspect might be why having a  nearly free action is the right
hypothesis on $C$.  To clarify this, one needs to discuss complex reflection
groups and Springer's regular elements. 

A {\it complex (pseudo)-reflection\/} is an element of $\GL_N(\bbC)$ ($=\GL(\bbC^N)$)
which fixes a unique hyperplane in $\bbC^N$ and has finite order.  Every real
reflection can be considered as a complex reflection by extending the field.
But the complex notion  is more general since a real reflection  must
have order two.  A {\it complex reflection group\/} is a group generated by
complex reflections.  As usual, we will only be interested in the finite case.
{\it Irreducible\/} complex reflection groups are defined as in the real case,
i.e., they are the ones which cannot be written as a nontrivial product of two
other complex reflection groups.
The irreducible complex reflection groups were classified by Shephard and
Todd~\cite{st:fur}.  The book of Lehrer and Taylor~\cite{lt:urg} gives a very
lucid treatment of these groups, even redoing the Shephard-Todd
classification. 

Call an element $g$ in a finite complex reflection group $W$ {\it regular\/}
if it has an eigenvector which does not lie on any of the reflecting
hyperplanes of $W$.  An eigenvalue corresponding to this eigenvector is also
called {\it regular\/}.  In type $A$ (i.e., in the case of the symmetric
group) we have the following connection between regular elements and nearly
free actions.  
\bpr
\label{near}
Let $W= A_{N-1}$.  Then $g\in W$ is regular if and only if it acts nearly freely on $[N]$.
\epr
\begin{proof}
Assume that  $o(g)=n$ and that $g$ acts nearly freely.  Suppose first that $n|N$ and 
$$
g=(1,2,\ldots,n)(n+1,n+2,\ldots,2n)\cdots.
$$
Other elements of order $n$ can be treated similarly.  Now $(1,2,\ldots,n)$ acting on $\bbC^n$ has eigenvalue $\om^{-1}=\om_n^{-1}$ with  eigenvector $[\om,\om^2,\ldots,\om^n]^t$ ($t$ denoting transpose) all of whose entries are distinct.  So 
$$
\bv=[\om,\om^2,\ldots,\om^n,2\om,2\om^2,\ldots,2\om^n,3\om,3\om^2,\ldots,3\om^n,\ldots]^t
$$ 
is an eigenvector for $g$ lying on none of the hyperplanes $x_i=x_j$.  In the case that $n|N-1$, one can insert a $0$ in $\bv$ at the coordinate of the fixed point and preserve regularity.

Now suppose $g$  does not act nearly freely.  Consider the case when $g$ has cycles of lengths $k$ and $l$ where $k,l\ge2$ and $k\neq l$.  Without loss of generality, say $k<l$ and
$$
g=(1,2,\ldots,k)(k+1,k+2,\ldots,k+l)\cdots.
$$
Suppose  $\bv=[a_1,a_2,\ldots,a_N]^t$ is a regular eigenvector for $g$.  Then $[a_1,a_2,\ldots,a_k]^t$ must either be an eigenvector for $g'=(1,2,\ldots,k)$ or be the zero vector.  But if $\bv$ lies on none of the hyperplanes then the second possibility is out because $k\ge2$.  The eigenvectors for $g'$ are $[\om_k^i,\om_k^{2i},\ldots,\om_k^{ki}]^t$ with corresponding eigenvalues $\om_k^{-i}$ for $1\le i\le k$.  Since everything we have said also applies to $g''=(k+1,k+2,\ldots,k+l)$, the eigenvalue $\om$ of $g$ must be a root of unity with order dividing $\gcd(k,l)$.  But $\gcd(k,l)\le k<l$, so the eigenvectors of $g''$ with such eigenvalues will all have repeated entries, a contradiction.  One can deal with the only remaining case (when $g$ has at least two fixed points) similarly.
\end{proof}

In addition to the previous proposition, we will need
the following general result,  It is easy to prove from the definitions and so is left to the reader.
\begin{lemma}
\label{Gmod}
Let $V$ be a $G$-module.
\ben
\item[(a)]  If $W\sbe V$ is a $G$-submodule then the quotient space $V/W$ is also $G$-module.
\item[(b)]  If $H\le G$ is a subgroup, then $V$ is also an $H$-module.\hqed 
\een
\end{lemma}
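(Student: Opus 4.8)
The plan is to verify directly that the vector spaces in question satisfy the definition of a $G$-module (respectively $H$-module) recalled in Section~\ref{rtb}: the group must act by invertible linear transformations in a way compatible with the group law.

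Part (b) is immediate. From the $G$-module $V$ we have a group homomorphism $\rho\colon G\ra\GL(V)$, $g\mapsto[g]$, and restricting $\rho$ to the subgroup $H\le G$ yields a map $\rho|_H\colon H\ra\GL(V)$ which is again a homomorphism. Hence $H$ acts on $V$ by invertible linear transformations, so $V$ is an $H$-module. The underlying vector space and the linear maps $[h]$ for $h\in H$ are literally unchanged; only the acting group has shrunk.

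Part (b) being this trivial, the real content is in (a), and even there it is slight. I would define an action of $G$ on $V/W$ by
$$
g\,(\bv+W)=g\bv+W
$$
for $g\in G$ and $\bv\in V$. The one point that needs care — and this is exactly where the hypothesis that $W$ is a \emph{submodule} enters — is that this is well defined: if $\bv+W=\bv'+W$, then $\bv-\bv'\in W$, so $g(\bv-\bv')\in W$ by the $G$-invariance of $W$, and therefore $g\bv+W=g\bv'+W$. Once well-definedness is established, the remaining checks are routine. Each $[g]$ descends to a linear map on $V/W$ since the quotient map $V\ra V/W$ is linear and intertwines the two $G$-actions; this descended map is invertible, with inverse induced by $[g^{-1}]$; and the identities $e\,(\bv+W)=\bv+W$ and $(gh)(\bv+W)=g\,(h\,(\bv+W))$ follow immediately from the corresponding identities in $V$. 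Thus $V/W$ is a $G$-module.

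The only genuine obstacle — such as it is — is the well-definedness verification in (a); everything else is bookkeeping from the definitions, which is why the lemma can safely be left to the reader.
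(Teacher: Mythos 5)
Your proof is correct and is exactly the routine verification from the definitions that the paper has in mind when it leaves the lemma to the reader: restriction of the homomorphism $\rho\colon G\ra\GL(V)$ for (b), and well-definedness of the induced action on cosets (using $G$-invariance of $W$) for (a). Nothing further is needed.
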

Also, we generalize the notation~\ree{X^g}: if $V$ is a $G$-module then the {\it invariants\/} of $G$ in $V$ are
$$
V^G=\{\bv\in V\ :\ \mbox{$g\bv=\bv$ for all $g\in G$}\}.
$$

Springer's Theorem relates two algebras.  To define the first, note that
a group $G$ acts on itself by left multiplication.  The corresponding
permutation module $\bbC[G]$ is called the {\it group algebra\/} and it is an
algebra, not just a vector space, because one can formally multiply linear
combinations of group elements.   The group algebra is important in part
because it contains every irreducible representation of
$G$.  In particular, the following is true.   See Proposition~1.10.1
in~\cite{sag:sym} for more details.
\bth
\label{C[G]}
Let $G$ be a finite group with irreducible modules
$V^{(1)},V^{(2)},\ldots,V^{(k)}$ and write $\bbC[G]=\oplus_i m_i V^{(i)}$.
Then, for all $i$, 
$$
m_i=\dim V^{(i)}.
$$
so every irreducible appears with multiplicity equal to its degree.  Taking dimensions, we have

\eqed{
\sum_{i=1}^k \left(\dim V^{(i)}\right)^2=\#G
}
\eth

The second algebra is defined for any  subgroup
$W\le\GL_N(\bbC)$.  Thinking of $x_1,x_2,\ldots,x_N$ as the coordinates of
$\bbC^N$, $W$ acts on the  algebra of polynomials $\cS=\bbC[x_1,x_2,\ldots,x_N]$
by linear transformations of the $x_i$.  For example, if $W=\fS_N$ then $W$
acts on $\cS$ by permuting the variables.  The {\it algebra of coinvariants of
  $W$\/} is the quotient 
\beq
\label{A}
A=\cS/\cS^W_+
\eeq
where $\cS^W_+$ is  the ideal generated by the invariants of $W$  in $\cS$ which are homogeneous of positive degree.   Note that by Lemma~\ref{Gmod}(a), $W$ also acts on $A$.

Now let $g$ be a regular element of $W$ of order $n$ and let $C=\spn{g}$ be
the cyclic group it generates. We also let $\om=\om_n$.  Define an action of
the product group $W\times C$ on the group algebra $\bbC[W]$ by having $W$ act
by multiplication on the left and $C$ act by multiplication on the right.
These actions commute because of the associative law in $W$, justifying the
use of the direct product.  We 
also have an action of $W\times C$ on $A$:  We already noted in the previous
paragraph how $W$ acts on $A$, and we let $C$ act by 
\beq
\label{g}
g(x_i)=\om x_i
\eeq
for $i\in[N]$. The following is a beautiful theorem of Springer~\cite{spr:ref} as reformulated by Kra{\'s}kiewicz and Weyman~\cite{kw:aca}.
\bth
\label{spr}
Let $W$ be a finite complex reflection group with coinvariant algebra $A$, and let $C\le W$ be cyclically generated by a regular element.  Then $A$ and the group algebra $\bbC[W]$ are isomorphic as $W\times C$ modules.\hqed
\eth

Although $A$ and $\bbC[W]$ are isomorphic, the former has the  advantage  that it is graded, i.e., we can write $A=\oplus_{d\ge0} A_d$ where $A_d$ are the elements in $A$ homogeneous of degree $d$.  (This is well defined because the invariant ideal we modded out by is generated by homogeneous polynomials.)  And any graded algebra over $\bbC$ has a {\it Hilbert series\/}
$$
\Hilb(A;q)=\sum_{d\ge0} \dim_\bbC A_d q^d.
$$
It is this series and the previous theorem which permitted Reiner, Stanton, and White to formulate a powerful cyclic sieving result.  In it and in the following corollary, the action of the cyclic group on left cosets is by left multiplication.
\bth
\label{reg}
Let $W$ be a finite complex reflection group with coinvariant algebra $A$, and let $C\le W$ be cyclically generated by a regular element $g$.  Take any $W'\le W$ and consider the invariant algebra $A^{W'}$.  Then cyclic sieving is exhibited by the triple
$$
\left(\ W/W',\ C,\ \Hilb(A^{W'};q)\ \right).
$$
\eth
\begin{proof}
By Theorem~\ref{spr} we have an isomorphism $\phi:A\ra\bbC[W]$ of $W\times C$
modules.  So by Lemma~\ref{Gmod}(b) they are also isomorphic as
$C$-modules. Since the actions of $C$ and $W'$ commute, the invariant algebras
$A^{W'}$ and $\bbC[W]^{W'}$ are also $C$-modules and $\phi$ restricts to an
isomorphism between them.   

By~\ree{g}, the action of $g$ on the $d$th graded piece of $A^{W'}$ is just
multiplication by $\om^d$.  But this is exactly the same as the action on the
$d$th summand in the $C$-module $V_{\Hilb(A^{W'})}$ as defined for any
generating function $f$ by equation~\ree{V_f}.  So $A^{W'}\iso
V_{\Hilb(A^{W'})}$ as $C$-modules. 

As far as $\bbC[W]^{W'}$, consider the set of right cosets $W'\setm W$.
Note that $\sum_i c_i g_i\in \bbC[W]$ will be
$W'$-invariant if and only if the coefficients $c_i$ are constant on each
right coset.  So $\bbC[W]^{W'}$ is $C$-isomorphic to the permutation module
$\bbC(W'\setm W)$ with $C$ acting on the right.  But since $C$ is Abelian,
this is isomorphic to the module $\bbC(W/W')$ for the left cosets with $C$
acting (as usual) on the left. 

Putting the isomorphisms in last three paragraphs together and using Theorem~\ref{csp:thm} completes the proof.
\end{proof}

We can specialize this theorem to the case of Coxeter groups and their parabolic subgroups. One only needs the fact~\cite[\S IV.4]{hil:gcg} that, for the length generating function defined by~\ree{W^J},
$$
W^J(q)=\Hilb(A^{W_J};q).
$$
\bco
Let $(W,S)$ be a finite Coxeter system and let $J\sbe S$.  Let $C\le W$ be cyclically generated by a regular element $g$.  Then the triple
$$
\left(\ W/W_J,\ C,\ W^J(q)\ \right)
$$
satisfies the cyclic sieving phenomenon.\hqed
\eco

If we specialize even further to type $A$, then we obtain the CSP in~\ree{set}.
To see this, note first that $g$ being regular  is equivalent to its acting nearly freely by Proposition~\ref{near}.  For $J=S\setminus\{s_k\}$, the action on left cosets $\fS_N/(\fS_N)_J$ is isomorphic to the action on ${[n]\choose k}$ as can be seen using the description of the minimal length representatives~\ree{S_n^J}.  Finally, Proposition~\ref{W^J(q)} shows that the generating function is correct.

\section{Promotion on rectangular standard Young tableaux}
\label{prs}

Rhoades~\cite{rho:csp} proved an amazing cyclic sieving result about rectangular Young tableaux under the action of promotion.  While the theorem is combinatorially easy to state, his proof involves deep results about Kazhdan-Lusztig representations~\cite{kl:rcg} and a characterization of the dual canonical basis by Skandera~\cite{ska:dck}.  We will start by giving some background about Young tableaux.

A {\it partition of $n\in\bbN$\/} is a weakly decreasing sequence of positive integers $\la=(\la_1,\la_2,\ldots,\la_l)$ such that $\sum_i \la_i=n$.  We use the notation $\la\ptn n$ for this concept and call the $\la_i$ {\it parts\/}.  For example, the partitions of $4$ are $(4)$, $(3,1)$, $(2,2)$, $(2,1,1)$, and $(1,1,1,1)$.  We will use exponents to denote multiplicities just as with multisets.  So $(1,1,1,1)=(1^4)$, $(2,1,1)=(2,1^2)$, and so forth.  We will sometimes drop the parentheses and commas to simplify notation.  Partitions play an important role in number theory, combinatorics, and representation theory.  See the text of Andrews~\cite{and:tp} for more information. 

Associated with any partition $\la=(\la_1,\la_2,\ldots,\la_l)$ is its {\it
  Ferrers diagram\/}, also denoted $\la$, which consists of $l$ left-justified
rows of dots with 
$\la_i$ dots in row $i$.  We let $(i,j)$ stand for the position of the dot in row $i$ and column $j$.  For example, the partition $\la=(5,4,4,2)$ has diagram
\beq
\label{la}
\la=
\barr{ccccc}
\bul & \bul & \bul & \bul & \bul\\
\bul & \bul & \rule{5pt}{5pt}& \bul\\
\bul & \bul & \bul & \bul\\
\bul & \bul
\earr
\eeq
where the $(2,3)$ dot has been replaced by a square. 
Note that sometimes empty boxes are used instead of dots.  Also we are using English notation, as opposed to the French version where the parts are listed bottom to top.

If $\la\ptn n$ is a Ferrers diagram, then a {\it standard Young tableau, $T$, of shape $\la$\/} is a bijection $T:\la\ra[n]$ such that rows and columns increase.  We let $\SYT(\la)$ denote the set of such tableaux and  also
$$
\SYT_n=\bigcup_{\la\ptn n}\SYT(\la).
$$
Also define
$$
f^\la=\#\SYT(\la).
$$
To illustrate, 
$$
\SYT(3,2)=\left\{
\barr{ccc}
1&2&3\\
4&5
\earr
,\quad
\barr{ccc}
1&2&4\\
3&5
\earr
,\quad
\barr{ccc}
1&2&5\\
3&4
\earr
,\quad
\barr{ccc}
1&3&4\\
2&5
\earr
,\quad
\barr{ccc}
1&3&5\\
2&4
\earr
\right\}
$$
so $f^{(3,2)}=5$.  We let $T_{i,j}$ denote the element in position $(i,j)$ and
write $\sh T$ to denote the partition which is $T$'s shape.  In Rhoades'
theorem, the cyclic sieving set will be $X=\SYT(n^m)$, a set of standard Young
tableaux of {\it rectangular shape\/}. 

Partitions and Young tableaux are intimately connected with representations of
the symmetric and general linear groups.  Given $g\in\fS_n$, its {\it cycle
  type\/} is the partition gotten by arranging $g$'s cycle lengths in weakly
decreasing order.  For example, $g=(1,5,2)(3,7)(4,8,9)(6)$ has cycle type
$\la=(3,3,2,1)$. Since the conjugacy classes of $\fS_n$ consist of all
elements of the same cycle type, they are naturally indexed by partitions
$\la\ptn n$.  So by Theorem~\ref{G} (a), the partitions $\la$ also index the
irreducible $\fS_n$-modules, $V^\la$.  In fact (see Theorem 2.6.5 in~\cite{sag:sym})
\beq
\label{dimV^la}
\dim V^\la = f^\la.
\eeq
and there are various constructions which use Young tableaux of a given shape to build the corresponding representation.  Note that from Theorem~\ref{C[G]} we obtain
\beq
\label{f^la^2}
\sum_{\la\ptn n} \left(f^\la\right)^2 = n!
\eeq

If one ignores its representation theory provenance, equation~\ree{f^la^2} can be viewed as a purely combinatorial statement about tableaux.  So one could prove it combinatorially by finding a bijection between $\fS_n$ and pairs $(P,Q)$ of standard Young tableaux of the same shape $\la$, with $\la$ varying over all partitions of $n$.  The algorithm we will describe to do this is due to Schensted~\cite{sch:lid}.  It was also discovered by Robinson~\cite{rob:rsg} in a different form.  A {\it partial Young tableau\/} will be a filling of a shape with increasing rows and columns (but not necessarily using the numbers $1,\ldots,n$).  We first describe {\it insertion\/} of an element $x$ into a partial tableau $P$ with $x\not\in P$.
\ben
\item  Initialize with $i=1$ and $p=x$.
\item  If there is an element of row $i$ of $P$ larger than $p$, then remove the left-most such element and put $p$ in that position.  Now repeat this step with $i$ replaced by $i+1$ and $p$ replaced with the removed element.    
\item  When one reaches a row where no element of that row is greater then $p$, then $p$ is placed at the end of the row and insertion terminates with a new tableau, $I_x(P)$.
\een
The removals are called {\it bumps\/} and are defined so that at each step of the algorithm the rows and columns remain increasing.  For example, if
$$
T=\barr{cccc}
1&3&5&6\\
2&8&9\\
7
\earr
$$
then inserting $4$ gives
$$
\barr{ccccc}
1&3&5&6&\leftarrow4\\
2&8&9\\
7
\earr
,
\barr{ccccc}
1&3&4&6&\\
2&8&9& &\leftarrow5\\
7
\earr
,
\barr{ccccc}
1&3&4&6&\\
2&5&9\\
7& & & &\leftarrow8
\earr
,
\barr{cccc}
1&3&4&6\\
2&5&9\\
7&8
\earr
=I_4(T).
$$

We can now describe the map $w\mapsto(P,W)$.  Given $w=w_1w_2\ldots w_n$ in
1-line notation, we build a sequence of partial tableaux $P_0=\emp,
P_1,\ldots,P_n=P$ where $\emp$ is the empty tableau and $P_k=I_{w_k}P_{k-1}$
for all $k\ge1$.  At the same time, we construct a sequence
$Q_0=\emp,Q_1,\ldots,Q_n=Q$ where $Q_k$ is obtained from $Q_{k-1}$ by placing
$k$ in the unique new position in $P_{k+1}$.   To illustrate, if $w=31452$
then we obtain 
$$
\barr{cccccccc}
P_k:
&
\barr{c}
\emp\\
\rule{0pt}{10pt}
\earr,
&
\barr{c}
3\\
\rule{0pt}{10pt}
\earr,
&
\barr{c}
1\\
3
\earr,
&
\barr{cc}
1&4\\
3
\earr,
&
\barr{ccc}
1&4&5\\
3
\earr,
&
\barr{ccc}
1&2&5\\
3&4
\earr
&
=P,
\\[20pt]
Q_k:
&
\barr{c}
\emp\\
\rule{0pt}{10pt}
\earr,
&
\barr{c}
1\\
\rule{0pt}{10pt}
\earr,
&
\barr{c}
1\\
2
\earr,
&
\barr{cc}
1&3\\
2
\earr,
&
\barr{ccc}
1&3&4\\
2
\earr,
&
\barr{ccc}
1&3&4\\
2&5
\earr
&
=Q.
\earr
$$
This procedure is invertible.  Given $(P_k,Q_k)$ then we find the position $(i,j)$ of $k$ in $Q_k$.  We reverse the bumping process in $P_k$ starting with the element in $(i,j)$.  The element removed from the top row of $P_k$ then becomes the $k$th entry of $w$.  This map is called the {\it Robinson-Schensted correspondence\/} and denoted $w\stackrel{\rm R-S}{\mapsto}(P,Q)$.  We have proved the following result.
\bth
\label{RS}
For all $n\ge0$, the map $w\stackrel{\rm R-S}{\mapsto}(P,Q)$ is a bijection 

\vs{5pt}

\eqed{
 \fS_n\stackrel{\rm R-S}{\llra}\{(P,Q)\ :\ P,Q\in\SYT_n,\ \sh(P)=\sh(Q)\}.
}
\eth

In order to motivate the polyomial for Rhoades' CSP, we describe a wonderful formula due to Frame, Robinson, and Thrall~\cite{frt:hgs} for $f^{\la}$.    The {\it hook\/} of $(i,j)$ is the set of cells to its right in the same row or  below in the same column:
$$
H_{i,j}=\{(i,j')\in\la\ :\ j'\ge j\} \cup \{(i',j)\in\la\ :\ i'\ge i\}. 
$$
The corresponding {\it hooklength\/} is $h_{i,j}=\# H_{i,j}$.
The hook of $(2,2)$ in $\la=(5,4,4,2)$ is indicated by crosses is the following diagram
$$
\barr{ccccc}
\bul & \bul & \bul & \bul & \bul\\
\bul &\times&\times&\times\\
\bul &\times& \bul & \bul\\
\bul &\times
\earr
$$
and so $h_{2,2}=5$.  The next result is called the Frame-Robinson-Thrall Hooklength Formula.
\begin{theorem}
\label{frt}
If $\la\ptn n$ then

\eqed{
f^\la=\frac{n!}{\dil\prod_{(i,j)\in\la} h_{i,j}}.
}
\end{theorem}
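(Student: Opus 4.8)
The plan is to prove the formula by showing that both sides of the claimed identity satisfy the same recursion and the same base case. Write $F^\lambda=n!/\prod_{(i,j)\in\lambda}h_{i,j}$ for the right-hand side. When $n=1$ we have $\lambda=(1)$ and $f^{(1)}=1=F^{(1)}$, so it suffices to prove the \emph{branching recursion}
$$
F^\lambda=\sum_{\mu}F^\mu,
$$
where $\mu$ ranges over all partitions of $n-1$ obtained from $\lambda$ by deleting a single \emph{removable corner}, i.e.\ a cell $(a,b)\in\lambda$ such that $\lambda\setminus\{(a,b)\}$ is again (the diagram of) a partition. That $f^\lambda$ obeys this recursion is immediate from the definition of $\SYT(\lambda)$: in any $T\in\SYT(\lambda)$ the entry $n$ must occupy a removable corner, and erasing that entry gives a bijection from $\SYT(\lambda)$ onto the disjoint union $\bigsqcup_\mu\SYT(\mu)$. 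So the whole content of the theorem is the purely combinatorial hook-length identity $F^\lambda=\sum_\mu F^\mu$.

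The cleanest route to that identity is the probabilistic \emph{hook walk} of Greene, Nijenhuis, and Wilf (see also Sagan~\cite{sag:sym}). Fix $\lambda\vdash n$ and run the following random walk on the cells of $\lambda$: pick a starting cell uniformly among the $n$ cells; from a current cell $(i,j)$ that is not a removable corner, jump to a cell chosen uniformly at random from $H_{i,j}\setminus\{(i,j)\}$; stop upon reaching a removable corner. The walk always terminates, and the key claim is that for each removable corner $(a,b)$,
$$
\Pr[\text{the walk stops at }(a,b)]=\frac{1}{n}\prod_{i=1}^{a-1}\frac{h_{i,b}}{h_{i,b}-1}\cdot\prod_{j=1}^{b-1}\frac{h_{a,j}}{h_{a,j}-1}.
$$
Granting this, the proof finishes by bookkeeping: deleting the removable corner $(a,b)$ decreases by exactly $1$ each hook length $h_{i,b}$ with $i<a$ and each hook length $h_{a,j}$ with $j<b$, leaves every other hook length unchanged, and removes the cell $(a,b)$ with $h_{a,b}=1$. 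Hence
$$
\frac{F^{\lambda\setminus\{(a,b)\}}}{F^\lambda}=\frac{(n-1)!}{n!}\cdot\frac{\prod_{(i,j)\in\lambda}h_{i,j}}{\prod_{(i,j)\in\lambda\setminus\{(a,b)\}}h_{i,j}}=\frac{1}{n}\prod_{i=1}^{a-1}\frac{h_{i,b}}{h_{i,b}-1}\cdot\prod_{j=1}^{b-1}\frac{h_{a,j}}{h_{a,j}-1},
$$
which is precisely $\Pr[\text{walk stops at }(a,b)]$. Summing over all removable corners and using that the probabilities sum to $1$ yields $\sum_\mu F^\mu=F^\lambda$, completing the induction.

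The hard part is proving the displayed probability formula, and this is where I expect the main obstacle to lie. The argument conditions the walk on the set $R=\{r_1<\dots<r_s=a\}$ of rows it visits and the set $Q=\{q_1<\dots<q_t=b\}$ of columns it visits (the row- and column-coordinates along the walk are weakly increasing, with exactly one of them strictly increasing at each step). The crucial structural fact is that, \emph{given} it starts at $(r_1,q_1)$, the probability that the walk visits exactly rows $R$ and columns $Q$ factors as $\prod_{u=1}^{s-1}\frac{1}{h_{r_u,b}-1}\cdot\prod_{v=1}^{t-1}\frac{1}{h_{a,q_v}-1}$; this is proved by induction on the length of the walk, the point being that from a cell $(i,j)$ one has $|H_{i,j}|-1=h_{i,j}-1=(\text{arm length})+(\text{leg length})$, so the ``next column jump'' and the ``next row jump'' decouple. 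Summing this factored expression over all admissible $R\ni a$, $Q\ni b$ and over all starting cells $(r_1,q_1)$ with $r_1\le a$, $q_1\le b$, and dividing by $n$, telescopes into the product $\frac{1}{n}\prod_{i=1}^{a-1}\bigl(1+\tfrac{1}{h_{i,b}-1}\bigr)\prod_{j=1}^{b-1}\bigl(1+\tfrac{1}{h_{a,j}-1}\bigr)$, as claimed. (Alternative approaches I could take instead: induct using the Frobenius determinantal formula $f^\lambda=n!\det\bigl(1/(\lambda_i-i+j)!\bigr)$ together with a Vandermonde-type evaluation, or cite a bijective proof; but the hook-walk argument is the most self-contained given the background already in place.)
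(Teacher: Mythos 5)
The paper itself offers no proof of this theorem: it is quoted as a classical result of Frame, Robinson, and Thrall~\cite{frt:hgs}, whose original argument is a direct induction on the hook-length products. What you give instead is the probabilistic hook-walk proof of Greene, Nijenhuis, and Wilf (this is also the proof presented in~\cite{sag:sym}), and it is correct and complete in outline. The reduction to the branching identity via the position of $n$ in a standard tableau is right; the bookkeeping showing that the ratio of the two hook-length expressions for $\lambda$ and $\lambda\setminus\{(a,b)\}$ equals the claimed stopping probability is right (one notational quibble: in your displayed ratio the denominator product must be taken with hook lengths recomputed in the smaller shape, not the hook lengths of $\lambda$ restricted to its cells — your prose makes the intent clear, but the formula as written is ambiguous); and you correctly isolate the real content in the trajectory-conditioning lemma, whose induction rests on the identity $(h_{i,b}-1)+(h_{a,j}-1)=h_{i,j}-1$ for $i\le a$, $j\le b$ when $(a,b)$ is a removable corner — this is exactly what your arm-plus-leg ``decoupling'' remark encodes, and it is what makes the sum over row and column sets telescope into the product form. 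Relative to the Frame--Robinson--Thrall induction, the hook walk buys a conceptual explanation of why the reciprocal hook-length product behaves like a probability distribution on corners, at the price of the somewhat delicate conditioning argument; either route is a legitimate way to supply the proof the survey omits.
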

To illustrate this theorem, the hooklengths for $\la=(3,2)$ are as follows
$$
h_{i,j}:\barr{ccc}
4&3&1\\
2&1
\earr
$$
so $f^{(3,2)}=5!/(4\cdot 3\cdot 2\cdot 1^2)=5$ as before.
The polynomial which will appear in Rhoades' cyclic sieving result is a $q$-analogue of the Hooklength Formula
\beq
\label{f^la(q)}
f^\la(q)=\frac{[n]_q!}{\dil\prod_{(i,j)\in\la} [h_{i,j}]_q}
\eeq
where $\la\ptn n$.

The only thing left to define is the group action and this will be done using
Sch\"utzenberger's promotion operator~\cite{sch:pme}.   Define $(i,j)$ to be a
{\it corner\/} of $\la$ if neither $(i+1,j)$ nor $(i,j+1)$ is in $\la$.  The
corners of $\la$ displayed in~\ree{la} are $(1,5)$, $(3,4)$, and $(4,2)$.
Given $T\in\SYT(\la)$ we define its {\it promotion\/}, $\bdy T$, by an
algorithm. 
\ben
\item Replace $T_{1,1}=1$ by a dot.
\item If the dot is in position $(i,j)$ then exchange it with with $T_{i+1,j}$ or $T_{i,j+1}$, whichever is smaller.  (If only one of the two elements exist, use it for the exchange.)  Iterate this step until $(i,j)$ becomes a corner.
\item Subtract $1$ from all the elements in the array, and replace the dot in corner $(i,j)$ by $n$ to obtain $\bdy T$.
\een
The exchanges in the second step are called {\it slides\/}.  Note that the slides are constructed so that at every step of the process the array has increasing rows and columns and so $\bdy T\in\SYT(\la)$.
By way of illustration, if\beq
\label{T}
T=\barr{ccc}
1&3&5\\
2&4&6\\
7
\earr
\eeq
then we get the sliding sequence
$$
\barr{ccc}
\bul&3&5\\
2   &4&6\\
7   
\earr,\quad
\barr{ccc}
2   &3&5\\
\bul&4&6\\
7   
\earr,\quad
\barr{ccc}
2&3   &5\\
4&\bul&6\\
7
\earr,\quad
\barr{ccc}
2&3&5   \\
4&6&\bul\\
7
\earr,\quad
\barr{ccc}
1&2&4\\
3&5&7\\
6
\earr
=\bdy T.
$$

It is easy to see that  the algorithm can be reversed step-by-step, and so promotion is a bijection on $\SYT(\la)$.  Thus the operator generates a group $\spn{\bdy}$ acting on standard Young tableaux of given shape.  In general, the action seems hard to describe, but things are much nicer for certain shapes. In particular, we have the following result of Haiman~\cite{hai:dea}.
\bth
\label{n^m}
If $\la=(n^m)$ then $\bdy^{mn}(T)=T$ for all $T\in\SYT(\la)$.\hqed
\eth
For example, if $\la=(3^2)$ then  $\bdy$ has cycle decomposition
\beq
\label{bdy}
\bdy=\left(\
\barr{ccc}
1&2&3\\
4&5&6
\earr,\quad
\barr{ccc}
1&2&5\\
3&4&6
\earr,\quad
\barr{ccc}
1&3&4\\
2&5&6
$$
\earr\
\right)
\left(\
\barr{ccc}
1&2&4\\
3&5&6
\earr,\quad
\barr{ccc}
1&3&5\\
2&4&6
$$
\earr\
\right)
\eeq
from which one sees that $\bdy^6$ is the identity map in agreement with the previous theorem.

We can now state one of the main results in Rhoades' paper~\cite{rho:csp}.
\bth
\label{rho}
If $\la=(n^m)$ then the triple
$$
\left(\ \SYT(\la),\ \spn{\bdy},\ f^\la(q)\ \right) 
$$
exhibits the cyclic sieving phenomenon.\hqed
\eth
Rhoades also has a corresponding theorem for promotion of semistandard Young
tableaux of shape $\la$, a generalization of standard Young tableaux where
repeated entries are allowed which will be discussed in Section~\ref{mgm}.
The polynomial used is 
the principal specialization of the Schur function $s_\la$, a symmetric
function which can be viewed either as encoding the character of the
irreducible module $V^\la$ or as the generating function for semistandard tableaux.  
Both the standard and semistandard results were originally conjectured by Dennis White.

\section{Variations on a theme}
\label{vt}

We will now mention several papers which have been inspired by Rhoades' work.
Stanley~\cite{sta:pe} asked if there were a more elementary proof of
Theorem~\ref{rho}.  A step in this direction for rectangles with 2 or 3 rows
was given by Petersen, Pylyavskyy, and Rhoades~\cite{ppr:pcs} who reformulated the theorem in
a more geometric way.  We will describe how this is done in the 2-row case in
detail. 

A {\it (complete) matching\/} is a graph, $M$, with vertex set $[2n]$ and  $n$ edges no two of which share a common vertex. 
The matching is {\it noncrossing\/} if it does not contain a pair of edges $ab$ and $cd$ with 
\beq
\label{acbd}
a<c<b<d
\eeq 
Equivalently, if the vertices are arranged in order around a circle, say
counterclockwise, then no pair of edges intersect.  The $5$ noncrossing
matchings on $[6]$ are displayed in Figure~\ref{match} below.  There is a bijection between $\SYT(n^2)$ and noncrossing matchings on $[2n]$  as follows.  

A {\it ballot sequence of length $n$\/} is a sequence $B=b_1b_2\ldots b_n$ of
positive integers such that for all prefixes $b_1b_2\ldots b_m$ and all
$i\ge1$, the number of $i$'s in the prefix is at least as great as the number
of $(i+1)$'s.  (One thinks of counting the votes in an election where at every
stage in the count, candidate $i$ is doing at least as well as candidate
$i+1$.)  If $\la\ptn n$ then there is a map from  tableaux $T\in\SYT_n$ to  ballot
sequences of length $n$: let $b_m=i$ if $m$ is in the $i$th row of $T$.  The
$T$ in~\ree{T} has corresponding ballot sequence $B=1212123$.  It is easy to
see that the row and column conditions on $T$ force $B$ to be a ballot
sequence.  And it is also a simple matter to construct the inverse of this
map, so it is a bijection. 

To make the connection with noncrossing matchings, suppose $T\in\SYT(n^2)$ and form a corresponding sequence of parentheses by replacing each $1$ in its ballot sequence by a left parenthesis and each $2$ with a right.  Now match the parentheses, and thus their corresponding elements of $T$, in the usual way: if a left parenthesis is immediately followed by a right parenthesis they are considered matched, remove any matched pairs and recursively match what is left.     The fact that the parentheses form a ballot sequence ensures that one gets a  noncrossing matching. And the fact that one starts with a tableau of shape $(n,n)$ ensures that the matching will be complete. For example,
$$
T=
\barr{cccc}
1&2&4&5\\
3&6&7&8
\earr
\mapsto
\barr{cccccccc}
1&2&3&4&5&6&7&8\\
(&(&)&(&(&)&)&)
\earr
\mapsto
M: 18, 23, 47, 56
$$
where numbers are shown above the corresponding parentheses and the matching is specified by its edges.  Again, an inverse is simple to construct so we have a bijection.

\bfi
$$
\partial = \left(\
\begin{pspicture}(-.9,0)(1,.9)
\SpecialCoor
\pscircle*(.6;0){.07}
\pscircle*(.6;60){.07}
\pscircle*(.6;120){.07}
\pscircle*(.6;180){.07}
\pscircle*(.6;240){.07}
\pscircle*(.6;300){.07}
\psline(.6;0)(.6;300)
\psline(.6;60)(.6;240)
\psline(.6;120)(.6;180)
\rput(.9;0){1}
\rput(.9;60){2}
\rput(.9;120){3}
\rput(.9;180){4}
\rput(.9;240){5}
\rput(.9;300){6}
\end{pspicture}
,\quad
\begin{pspicture}(-.9,0)(1,.9)
\SpecialCoor 
\pscircle*(.6;0){.07}
\pscircle*(.6;60){.07}
\pscircle*(.6;120){.07}
\pscircle*(.6;180){.07}
\pscircle*(.6;240){.07}
\pscircle*(.6;300){.07}
\psline(.6;0)(.6;180)
\psline(.6;60)(.6;120)
\psline(.6;240)(.6;300)
\rput(.9;0){1}
\rput(.9;60){2}
\rput(.9;120){3}
\rput(.9;180){4}
\rput(.9;240){5}
\rput(.9;300){6}
\end{pspicture}
,\quad
\begin{pspicture}(-.9,0)(1,.9)
\SpecialCoor 
\pscircle*(.6;0){.07}
\pscircle*(.6;60){.07}
\pscircle*(.6;120){.07}
\pscircle*(.6;180){.07}
\pscircle*(.6;240){.07}
\pscircle*(.6;300){.07}
\psline(.6;0)(.6;60)
\psline(.6;120)(.6;300)
\psline(.6;180)(.6;240)
\rput(.9;0){1}
\rput(.9;60){2}
\rput(.9;120){3}
\rput(.9;180){4}
\rput(.9;240){5}
\rput(.9;300){6}
\end{pspicture}\
\right)
\left(\
\begin{pspicture}(-.9,0)(1,.9)
\SpecialCoor 
\pscircle*(.6;0){.07}
\pscircle*(.6;60){.07}
\pscircle*(.6;120){.07}
\pscircle*(.6;180){.07}
\pscircle*(.6;240){.07}
\pscircle*(.6;300){.07}
\psline(.6;0)(.6;300)
\psline(.6;60)(.6;120)
\psline(.6;180)(.6;240)
\rput(.9;0){1}
\rput(.9;60){2}
\rput(.9;120){3}
\rput(.9;180){4}
\rput(.9;240){5}
\rput(.9;300){6}
\end{pspicture}
,\quad
\begin{pspicture}(-.9,0)(1,.9)
\SpecialCoor 
\pscircle*(.6;0){.07}
\pscircle*(.6;60){.07}
\pscircle*(.6;120){.07}
\pscircle*(.6;180){.07}
\pscircle*(.6;240){.07}
\pscircle*(.6;300){.07}
\psline(.6;0)(.6;60)
\psline(.6;120)(.6;180)
\psline(.6;240)(.6;300)
\rput(.9;0){1}
\rput(.9;60){2}
\rput(.9;120){3}
\rput(.9;180){4}
\rput(.9;240){5}
\rput(.9;300){6}
\end{pspicture}\
\right)
$$
\capt{\label{match} The action of $\bdy$ on matchings}
\efi

Applying this map to the tableaux displayed in~\ree{bdy} gives the matching
description of $\bdy$ in Figure~\ref{match}.
Clearly these cycles are obtained by rotating the matchings clockwise, and it is not hard to prove that this is always the case.  As mentioned in~\cite{ppr:pcs}, this interpretation of promotion was discovered, although never published, by Dennis White.  Note that this viewpoint makes it clear that $\bdy^{2n}(T)=T$, a special case of Theorem~\ref{n^m}.  Petersen, Pylyavskyy, and Rhoades use this setting and Springer's theory of regular elements to give a short proof of the following result which is equivalent to Theorem~\ref{rho} when $m=2$.
\bth
\label{ppr}
Let $\NCM(2n)$ be the set of noncrossing matchings on $2n$ vertices and let $R$ be rotation clockwise through an angle of  $\pi/n$.  Then the triple
$$
\left(\ \NCM(2n),\ \spn{R},\ f^{(n,n)}(q)\ \right) 
$$
exhibits the cyclic sieving phenomenon.\hqed
\eth
This trio of authors applies similar ideas to the 3-row case by replacing noncrossing matchings with $A_2$ webs.  Webs were introduced by Kuperberg~\cite{kup:sr2} to index a basis for a vector space used to describe the invariants of a tensor product of irreducible representations of a rank $2$ Lie algebra.

Westbury~\cite{wes:itc1} was able to generalize the Petersen-Pylyavskyy-Rhoades proof to a much wider setting.    To understand his contribution, consider the coinvariant algebra, $A$, for a Coxeter group $W$ as defined by~\ree{A}.  If $V^\la$ is an irreducible  module of $W$, then the corresponding {\it fake degree polynomial\/} is
\beq
\label{F^la(q)}
F^\la(q)=\sum_{d\ge0} m_d q^d
\eeq
where $m_d$ is the multiplicity of $V^\la$ in the $d$th graded piece of $A$.  We can extend this to any representation $V$ of $W$ by linearity.  That is, write $V=\sum_\la n_\la V^\la$ in terms of irreducibles and then let
$$
F^V(q)=\sum_\la n_\la F^{\la}(q).
$$ 
Since the coinvariant algebra is $W$-isomorphic to the regular representation, Theorem~\ref{C[G]} for $W=A_n$ and~\ree{dimV^la} imply that  
\beq
\label{F^la(1)}
F^\la(1)=f^\la
\eeq
so that the fake degree polynomial is a $q$-analogue of the number of
standard Young tableaux.  It can be obtained from the $q$-Hooklength
Formula~\ree{f^la(q)} by multiplying by an appropriate power of $q$.  

The next result, which can be found in Westbury's article~\cite{wes:itc1}, follows easily from Proposition 4.5 in Springer's original paper~\cite{spr:ref}.
\bth
\label{wes:thm}
Let $W$ be a finite complex reflection group and let $C\le W$ be cyclically generated by a regular element $g$.  Let $V$ be a $W$-module with a basis $B$ such that $gB=B$.  Then the triple
$$
\left(\ B,\ C,\ F^V(q)\ \right)
$$
exhibits the cyclic sieving phenomenon. \hqed
\eth
Petersen-Pylyavskyy-Rhoades used webs of types $A_1$ and $A_2$ for the bases $B$, and the irreducible symmetric group modules $V^{(n,n)}$ and $V^{(n,n,n)}$ to determine the fake degree polynomials.  Westbury is able to produce many interesting results by using other bases and any highest weight representation of a simple Lie algebra.  Crystal bases and Lusztig's theory of based modules~\cite[Ch.\ 27--28]{lus:iqg} come into play.

A  CSP related to Rhoades' was studied by Petersen and Serrano~\cite{ps:csl}.  Consider the
Coxeter group $B_n$ with generating set $\{s_1,s_2,\ldots,s_n\}$ where $s_1$
is the special element corresponding to the endpoint of the Dynkin diagram
adjacent to the edge labeled 4.  Every finite Coxeter group has a {\it longest
  element\/} which is  $w_0$ with $l(w_0)>l(w)$ for all $w\in W\setm\{w_0\}$.
Let $R(w_0)$ denote the set of reduced expressions for $w_0$ in $B_n$ where
$\ell(w_0)=n^2$.  We will represent each such expression by the sequence of
its subscripts.  For example, in $B_3$ the expression
$w_0=s_1s_3s_2s_3s_1s_2s_3s_1s_2$ would become the word $132312312$.    Act on
$R(w_0)$ by rotation, i.e., remove the first element of the sequence and move
it to the end.  In our example, $132312312\mapsto 323123121$. 
(One has an analogous action in any Coxeter system $(W,S)$ with $S$ simple where, if $s_i$ rotates from the front of $w_0$, then at the back  it is replaced by $s_j = w_0 s_i w_0$ which is also simple.  In type $B_n$, one has the nice property that $s_i=s_j$.) 
For the final ingredient, it is easy to see that the definitions of the permutation
statistics from Section~\ref{cgp} all make sense when applied to sequences of
integers.   

The main theorem of Petersen and Serrano~\cite{ps:csl} can now be stated as follows.  We will henceforth use $C_N$ to denote a cyclic group of cardinality $N$.
\bth
In $B_n$, let $C_{n^2}$ be the cyclic group of rotations of elements of $R(w_0)$.  Then the triple
$$
\left(\ R(w_0),\ C_{n^2},\  q^{-n{n\choose2}} f^{\maj}(R(w_0;q))\ \right)
$$ 
exhibits the cyclic sieving property. \hqed
\eth  
They prove this result by using  bijections due to Haiman~\cite{hai:mis,hai:dea} to relate promotion on tableaux of shape $(n^n)$ to rotation of words in $R(w_0)$.  In fact, they also show
$$
q^{-n{n\choose2}} f^{\maj}(R(w_0;q))=f^{(n^n)}(q)
$$  
where the latter is the $q$-analogue of the Hooklength Formula~\ree{f^la(q)}.  The tableaux version of the previous theorem also appeared in~\cite{rho:csp}, but the proof had gaps which Petersen and Serrano succeeded in filling.

Pon and Wang~\cite{pw:pes} have made steps towards finding an analogue of Rhoades' result for staircase tableaux.  The {\it staircase shape\/} is the one corresponding to the partition $\stc_n=(n,n-1,\ldots,1)$.  Note that $\stc_n\ptn{n+1\choose2}$.  The following result of Edelman and Greene~\cite{eg:bt} shows that staircase tableaux are also well behaved with respect to promotion.
\bth
\label{sc}
We have  $\bdy^{{n+1\choose2}}(T)=T^t$ for all $T\in\SYT(\stc_n)$ where $t$
denotes transpose. \hqed
\eth
Haiman~\cite{hai:dea} has a  theory of ``generalized staircases'' which considers for which partitions $\la\ptn N$, $\bdy^N(T)$ has a nice description for all $T\in\SYT(\la)$.  It includes Theorems~\ref{n^m} and~\ref{sc}.

Note that we have $\bdy^{n(n+1)}(T)=T$ for all $T\in\SYT(n^{n+1})$ as well as
for all $T\in\SYT(\stc_n)$.  So it is natural to try and relate these two sets
of tableaux and the action of promotion on them.   In particular, Pon and Wang
define an injection $\io:\SYT(\stc_n)\ra\SYT(n^{n+1})$ commuting with $\bdy$.
To construct this map, we need another operation of Sch\"utzenberger called
evacuation~\cite{sch:rcs}.  Given $T\in\SYT(\la)$ where $\la\ptn N$, we create
its {\it evacuation\/}, $\ep(T)$, by doing $N$ promotions.  After the $i$th
promotion, one puts $N-i+1$ in the ending position of the dot and this element does not
move in any further promotions.  The slide sequence for a promotion terminates
when the position $(i,j)$ of the dot is such that $(i+1,j)$ is either outside
$\la$ or contains a fixed element, and the same is true of $(i,j+1)$.  We will
illustrate this on 
\beq
\label{scT}
T=\barr{ccc}
1&3&6\\
2&4\\
5
\earr
\eeq
where fixed elements will be typeset in boldface:
$$
\barr{rcccccl}
T=\barr{ccc}
1 & 3 & 6\\
2 & 4\\
5
\earr
&
\stackrel{\bdy}{\mapsto}
&
\barr{ccc}
1 & 2 & 5\\
3 & \6\\
4
\earr
&
\stackrel{\bdy}{\mapsto}
&
\barr{ccc}
1 & 4 & \5\\
2 & \6\\
3
\earr
&
\stackrel{\bdy}{\mapsto}
&
\barr{ccc}
1 & 3 & \5\\
2 & \6\\
\4
\earr
\\[25pt]
&
\stackrel{\bdy}{\mapsto}
&
\barr{ccc}
1 & 2 & \5\\
\3& \6\\
\4
\earr
&
\stackrel{\bdy}{\mapsto}
&
\barr{ccc}
1 & \2& \5\\
\3& \6\\
\4
\earr
&
\stackrel{\bdy}{\mapsto}
&
\barr{ccc}
\1& \2& \5\\
\3& \6\\
\4
\earr
=\ep(T).
\earr
$$

Now given $T\in\SYT(\stc_n)$ we define $\io(T)$ as follows.  Construct
$\ep(T)$ and complement each entry, $x$, by replacing it with $n(n+1)+1-x$.
Then reflect the resulting tableau in the anti-diagonal.  Finally, paste  $T$
and the reflected-complemented tableau together along their staircase portions
to obtain $\io(T)$ of shape $(n^{n+1})$.  Continuing the example~\ree{scT}, we
see that the complement of $\ep(T)$ is 
$$
\barr{ccc}
12 &11 &8\\
10 &7\\
9 
\earr
$$
so that
$$
\io(T)=
\barr{ccc}
1 &3 &6 \\
2 &4 &8 \\
5 &7 &11\\
9 &10&12
\earr.
$$
As mentioned, Pon and Wang~\cite{pw:pes} prove the following result about their map $\io$.
\bth
We have
$$
\bdy(\io(T))=\io(\bdy(T)).
$$
for all $T\in\SYT(\stc_n)$.\hqed
\eth

To get a CSP for staircase tableaux, one needs an
appropriate polynomial.   The previous theorem permits one to obtain
information about the cycle structure of the action of $\bdy$ on staircase
tableaux from what is already know about rectangular tableaux.  It is hoped
that this will aid in the search for the correct polynomial.

\section{Multiple groups and multiple statistics}
\label{mgm}

It is natural to ask whether the cyclic sieving phenomenon can be extended to other groups.  Indeed, it is possible to define an analogue of the CSP for Abelian groups by considering them as products of cyclic groups.  For this we will also need to use multiple statistics, one for each cyclic group.  In this section we examine this idea, restricting to the case of two cyclic groups to illustrate the ideas involved.

Bicyclic sieving was first defined by Barcelo, Reiner, and
Stanton~\cite{brs:bd}.  Recall that $\Om$ is the (infinite) group of roots of unity.
\begin{Def}
Let $X$ be a finite set.  Let $C,C'$ be finite cyclic groups with $C\times C'$ acting on $X$, and fix embeddings $\om:C\ra \Om$, $\om':C'\ra\Om$.  Let $f(q,t)\in\bbN[q,t]$.  The triple $(X,C\times C',f(q,t))$ exhibits the {\it bicyclic sieving phenomenon\/} or {\it biCSP\/} if, for all $(g,g')\in C\times C'$, we have
\beq
\label{bicsp}
\# X^{(g,g')} = f(\om(g),\om'(g')).
\eeq
\end{Def}

Note that in the original definition of the CSP we did not insist on an
embedding of $C$ in $\Om$ but just used any root of unity with the same
order as $g$ (although Reiner, Stanton, and White did use an embedding in the
definition from
their original paper).  But this does not matter because if evaluating
$f(q)\in\bbR[q]$ at a primitive $d$th root of unity gives a real number, then
any $d$th root will  give the same value.  But in the definition of the biCSP
the choice of embeddings can make the difference  whether~\ree{bicsp}
holds or not.  To illustrate this, we use an example from a paper of Berget,
Eu, and Reiner~\cite{ber:ccs}. 

Take $X=\{1,\om,\om^2\}$ where $\om=\exp(2\pi i/3)$ and   let $C= C'=X$.   Define the action of  any $(\al,\be)\in C\times C'$ on $\ga\in X$ by
$$
(\al,\be) \ga=\al\be\ga
$$ 
where on the right multiplication is being done in $\bbC$.
Finally, consider the polynomial
$$
f(q,t)=1+ut+u^2t^2.
$$
If for the embeddings one takes identity maps, then it is easy to verify
case-by-case that $(X,C\times C',f(q,t))$ exhibits the biCSP.   But if one
modifies the embedding of $C'$ to be the one which takes $\om\ra\om^{-1}$ then
this is false.  For example, consider  $(\om,\om)$ whose action on
$X$ is the cycle $(1,\om^2,\om)$.  Using the first embedding pair we find
$f(\om,\om)=1+\om^2+\om=0$ reflecting the fact that there are no fixed points.
However, if one uses the second pair then the computation is
$f(\om,\om^{-1})=1+1+1=3$ which does not agree with the action.  

Part of the motivation for studying the biCSP was to generalize the notion of a bimahonian distribution to other Coxeter groups $W$.  It was observed by Foata and Sch\"utzenberger~\cite{fs:mii} that certain pairs of Mahonian statistics such as $(\maj(w),\inv(w))$ and $(\maj(w),\maj(w^{-1}))$ had the same joint distribution over $\fS_n$.  To define a corresponding bivariate distribution on  $W$, consider a field automorphism $\si$ lying in the Galois group
$\Gal(\bbQ[\exp(2\pi i/m)]/\bbQ)$ where $m$ is taken large enough so that the
extension $\bbQ[\exp(2\pi i/m)]$ of $\bbQ$ contains all the matrix entries of
all elements in the reflection representation of $W$.    Use the fake degree
polynomials~\ree{F^la(q)} to define the {\it $\si$-bimahonian distribution\/}
on $W$ by 
$$
F^{\si}(q,t)=\sum_{V^\la} F^{\si(\la)}(q) F^{\ol{\la}}(t)
$$
where the sum is over all irreducible $V^\la$, and $V^{\si(\la)}$ and $V^{\ol{\la}}$ are the modules defined (via Theorem~\ref{G} (c)) by the characters
$$
\mbox{$\chi^{\si(\la)}(w)=\si(\chi^\la(w))$ and $\chi^{\ol\la}(w)=\ol{\chi^\la(w)}$}
$$
for all $w\in W$.

To apply Springer's theory in this setting, suppose $g$ and $g'$ are regular elements of $W$ with regular eigenvalues $\om$ and $\om'$, respectively.  Consider the embeddings of  $C=\spn{g}$ and $C'=\spn{g'}$ into $\Om$ uniquely defined by mapping $g\mapsto \om^{-1}$ and $g'\mapsto(\om')^{-1}$.  (The reason for using inverses should be clear from the proof of Proposition~\ref{near}.)  Given $\si$ as above, pick $s\in\bbN$ so that $\si(\om)=\om^s$.  In this setting, Barcelo, Reiner, and Stanton~\cite{brs:bd} prove the following result.
\bth
\label{BRS}
 Let $W$ be  a finite complex reflection group with regular elements $g,g'$.  With the above notation, consider the $\si$-twisted action of $\spn{g}\times \spn{g'}$ on $W$ defined by
$$
(g,g')w=g^sw(g')^{-1}.
$$
Then the triple
$$
\left(\ W,\ \spn{g}\times \spn{g'},\ F^{\si}(q,t)\ \right)
$$
exhibits the bicyclic sieving phenomenon.\hqed
\eth

Reiner, Stanton and White had various conjectures about biCSPS exhibited by action on nonnegative integer matrices via row and column rotation.  These were communicated to Rhoades and proved by him in~\cite{rho:hlp}.  To talk about them, we will need some definitions.  A {\it composition of $n$ of length $l$\/} is a (not necessarily weakly decreasing) sequence $\mu=(\mu_1,\mu_2,\ldots,\mu_l)$ of nonnegative integers with $\sum_i \mu_i=n$.   We write $\mu\csn n$ and $\ell(\mu)=l$.  A {\it semistandard Young tableau of shape $\la$ and content $\mu$\/} is a function $T:\la\ra\bbP$ (where $\bbP$ is the positive integers) such that rows weakly increase, columns strictly increase, and $\mu_k$ is the number of $k$'s in $T$.  For example,
$$
T=
\barr{cccccc}
1&1&1&2&3&5\\
2&3&3&3\\
5&5
\earr
$$
is a semistandard tableau of shape $(6,4,2)$ and content $(3,2,4,0,3)$.  We write $\ctt T=\mu$ to denote the content and let 
$$
\SSYT(\la,\mu)=\{ T\ :\ \mbox{$\sh T=\la$ and $\ctt T=\mu$}\}.
$$

The {\it Kostka numbers\/} are 
$$
K_{\la,\mu}=\#\SSYT(\la,\mu).
$$
Note that if $\la\ptn n$ then $K_{\la,(1^n)}=f^\la$, the number of standard
Young tableaux.  Just like $f^\la$, the $K_{\la,\mu}$ have a nice
representation theoretic interpretation. 
Given $\mu=(\mu_1,\mu_2,\ldots,\mu_l)\csn n$ there is a corresponding {\it Young subgroup\/} of $\fS_n$ which is
$$
\fS_\mu=\fS_{\{1,2,\ldots,\mu_1\}}\times\fS_{\{\mu_1+1,\mu_1+2,\ldots,\mu_1+\mu_2\}}\times
\cdots\times \fS_{\{n-\mu_l+1,n-\mu_l+2,\ldots,n\}}
$$
where for any set $X$ we let $\fS_X$ be the group of permutations of $X$.
Consider the usual action of $\fS_n$ on left cosets $\fS_n/\fS_\mu$ and let
$M^\mu$ be the corresponding permutation module.  Decomposing into $\fS_n$-irreducibles gives 
$$
M^\mu=\sum_\la K_{\la,\mu} V^\la,
$$
 so that the Kostka numbers give the multiplicities of this decomposition.  The polynomials  which appear in these biCSPs are a $q$-analogue of the $K_{\la,\mu}$ called the {\it Kostka-Foulkes polynomials\/}, $K_{\la,\mu}(q)$.  We will not give a precise definition of them, but just say that they can be viewed in a couple of ways.  One is as the elements of transition matrices between the Schur functions and Hall-Littlewood polynomials (another basis for the algebra of symmetric functions over the field $\bbQ(q)$).  Another is as the generating function for a statistic called charge on semistandard tableaux which was introduced by Lascoux and Sch\"utzenberger~\cite{ls:mp}.

Knuth~\cite{knu:pmg} generalized the Robinson-Schensted correspondence in
Theorem~\ref{RS} to semistandard tableaux.  Given compositions $\mu,\nu\csn n$ let $\Mat_{\mu,\nu}$ denote the set of all matrices with nonnegative integer entries whose row sums are given by $\mu$ and whose column sums are given by $\nu$.   For example,
$$
\Mat_{(2,1),(1,0,2)}=
\left\{
\left[\barr{ccc}
1&0&1\\
0&0&1
\earr\right]
,
\left[\barr{ccc}
0&0&2\\
1&0&0
\earr\right]
\right\}.
$$
Note that $\Mat_{(1^n),(1^n)}$ is the set of $n\times n$ permutation matrices.

Given $M\in\Mat_{\mu,\nu}$ we first convert the matrix into a two-rowed array with the columns lexicographically ordered (the first row taking precedence) and column $ {\scriptstyle{i}\atop \scriptstyle{j}}$ occuring $M_{i,j}$ times.  To illustrate, 
$$
M=\left[
\barr{ccc}
1&2&0\\
1&0&1
\earr
\right]
\mapsto
\barr{ccccc}
1&1&1&2&2\\
1&2&2&1&3
\earr.
$$
Now use the same insertion algorithm as for the Robinson-Schensted correspondence to build a semistandard tableau $P$ from the elements of the bottom line of the array, while the elements of the top line are placed in a tableau $Q$ to maintain 
equalities of shapes,
$$
\barr{cccccccc}
P_k:
&
\barr{c}
\emp\\
\rule{0pt}{10pt}
\earr,
&
\barr{c}
1\\
\rule{0pt}{10pt}
\earr,
&
\barr{cc}
1&2\\
\rule{0pt}{10pt}
\earr,
&
\barr{ccc}
1&2&2\\
\rule{0pt}{10pt}
\earr,
&
\barr{ccc}
1&1&2\\
2
\earr,
&
\barr{cccc}
1&1&2&3\\
2
\earr
&
=P,
\\[20pt]
Q_k:
&
\barr{c}
\emp\\
\rule{0pt}{10pt}
\earr,
&
\barr{c}
1\\
\rule{0pt}{10pt}
\earr,
&
\barr{cc}
1&1\\
\rule{0pt}{10pt}
\earr,
&
\barr{ccc}
1&1&1\\
\rule{0pt}{10pt}
\earr,
&
\barr{ccc}
1&1&1\\
2
\earr,
&
\barr{cccc}
1&1&1&2\\
2
\earr
&
=Q.
\earr
$$
We denote this map by $M\stackrel{\rm R-S-K}{\mapsto} (P,Q)$.
Reversing the steps of the algorithm is much like the standard tableau case
once one realizes that during insertion equal elements enter into $Q$ from left to right.
Also, it should be clear that applying this map to permutation matrices
corresponds with the original algorithm.  So the full Robinson-Schensted-Knuth
Theorem~\cite{knu:pmg} is as follows. 
\bth
For all $\mu,\nu\csn n$, the map $M\stackrel{\rm R-S-K}{\mapsto} (P,Q)$ is a bijection

\vs{5pt}

\eqed{
\Mat_{\mu,\nu}\stackrel{\rm R-S-K}{\llra}\{(P,Q)\ :\ \con P=\nu,\ \con Q=\mu,\ \sh P = \sh Q\}.
}
\eth

To motivate Rhoades' result, note that 
by specializing Theorem~\ref{BRS} to type $A$, one can obtain the following
theorem. 
\bth
Let $C_n\times C_n$ act on $n\times n$ permutation matrices by rotation of rows in the first component and of columns in the second.  Then
$$
\left(\ \Mat_{(1^n),(1^n)},\quad C_n\times C_n,\quad \ep_n(q,t)\sum_{\la\ptn n}K_{\la,(1^n)}(q) K_{\la,(1^n)}(t)\ \right)
$$
exhibits the bicyclic sieving phenomenon, where

\vs{5pt}

\eqed{
\ep_n(q,t)
=\case{(qt)^{n/2}}{if $n$ is even,}{1}{if $n$ is odd.}
}
\eth
It is natural to ask for a generalization of this theorem to tableaux of arbitrary content, and that is one of the conjectures demonstrated by Rhoades in~\cite{rho:hlp}.   The proof uses a generalization of the  R-S-K correspondence due to Stanton and White~\cite{sw:sar}.  
\bth
Let $\mu,\nu\csn n$ have cyclic symmetries of orders $a|\ell(\mu)$ and $b|\ell(\nu)$, respectively.  Let 
$C_{\ell(\mu)/a}\times C_{\ell(\nu)/b}$ act on $\Mat_{\mu,\nu}$ by $a$-fold rotation of rows in the first component and $b$-fold rotation of columns in the second.  Then
$$
\left(\ \Mat_{\mu,\nu},\quad C_{\ell(\mu)/a}\times C_{\ell(\nu)/b},\quad 
\ep_n(q,t)\sum_{\la\ptn n}K_{\la,\mu}(q) K_{\la,\nu}(t)\ \right)
$$
exhibits the bicyclic sieving phenomenon.\hqed
\eth

Rather than considering two Mahonian statistics, one could take one Mahonian and one Eulerian.  
Given two statistics, $\sta$ and $\sta'$ on a set $X$ we let
$$
f^{\sta,\sta'}(X;q,t)=\sum_{y\in X} q^{\sta y}t^{\sta' y}.
$$
Through their study of Rees products of posets, Shareshian and Wachs~\cite{sw:eqf, sw:qep,sw:phr} were lead to consider the pair $(\maj,\exc)$.  
They proved, among other things, the following generalization of~\ree{A_n(q)}
$$
\sum_{n\ge0} f^{\maj,\exc}(\fS_n;q,t)\frac{x^n}{[n]_q!}
=\frac{(1-tq)\exp_q(x)}{\exp_q(qtx)-qt\exp_q(x)},
$$
where
$$
\exp_q(x)=\sum_{n\ge0} \frac{x^n}{[n]_q!}.
$$
(Actually, they proved a stronger result which also keeps track of the number of fixed points of $w$, but that will not concern us here.)

Sagan, Shareshian, and Wachs~\cite{ssw:eqf} used the Eulerian quasisymmetric
functions as developed in the earlier papers by the last two authors, as well
as a result of D\'esarm\'enien~\cite{des:fsa} about evaluating principal
specializations at roots of unity, to demonstrate the following cyclic sieving
result.  Note the interesting feature that one must take the difference 
$\maj - \exc$.
\bth
\label{S(la)}
Consider the action of the symmetric group on itself by conjugation and let $\fS(\la)$ denote the conjugacy class of permutations of cycle type $\la\ptn n$.  Then the triple
$$
\left(\ \fS(\la),\ \spn{(1,2,\ldots,n)},\ f^{\maj,\exc}\left(\fS(\la);q,q^{-1}\right)\ \right)
$$
exhibits the cyclic sieving phenomenon. \hqed
\eth

\section{Catalan CSPs}
\label{ncsp}

We will now consider cyclic sieving phenomena where analogues of the Catalan numbers play a role.    These will include noncrossing partitions and facets of cluster complexes.  Noncrossing matchings have already come into play in Theorem~\ref{ppr}.

A {\it set partition\/} of a finite set $X$ is a collection
$\pi=\{B_1,B_2,\ldots,B_k\}$ of nonempty subsets such that $\uplus_i B_i=X$ where $\uplus$
denotes disjoint union.  We write $\pi\ptn X$ and the $B_i$ are called {\it
  blocks\/}.  A set partition $\pi\ptn[n]$ is called {\it noncrossing\/} if
condition~\ree{acbd} never holds when $a,b$ are in one block of $\pi$ and
$c,d$ are in another.  Equivalently, with the usual circular arrangement of
$1,\ldots, n$, the convex hulls of different blocks do not intersect.  Let
$\NC(n)$ denote the set of noncrossing partitions of $[n]$.  Noncrossing
partitions were introduced by Kreweras~\cite{kre:pnc} and much information
about them can be found in the survey article of Simion~\cite{sim:csn} and the
memoir of Armstrong~\cite{arm:gnp}. 

The noncrossing partitions are enumerated by the {\it Catalan numbers\/}
$$
\# \NC(n)=\Cat_n\stackrel{\rm def}{=} \frac{1}{n+1}{2n\choose n}.
$$
(Often in the literature $C_n$ is used to denote the $n$th Catalan number, but this would conflict with our notation for cyclic groups.)
These numbers have already been behind the scenes as we also have $\# \SYT(n,n)=\#\NCM(2n)=\Cat_n$.  There are a plethora of combinatorial objects enumerated by $\Cat_n$, and Stanley maintains a list~\cite{sta:ca} which the reader can consult for more examples.  

A $q$-analogue of  $\Cat_n$, 
$$
\Cat_n(q)=\frac{1}{[n+1]_q}\gau{2n}{n}_q,
$$
was introduced by F\"urlinger and Hofbauer~\cite{fh:qcn}.  
The following result follows from Theorem~7.2 in Reiner, Stanton, and White's original paper~\cite{rsw:csp} where they proved a more refined version which also keeps track of the number of blocks.
\bth
Let 
\hs{-.5pt}
$C_n$ 
\hs{-.5pt}
act 
\hs{-.5pt}
on 
\hs{-.5pt}
$\NC(n)$
\hs{-.5pt}
by
\hs{-.5pt}
rotation.  Equivalently, let
$g\in\spn{(1,2,\ldots,n)}$ act on  
$\pi=\{B_1,B_2,\ldots,B_k\}\in\NC(n)$ by $g\pi=\{gB_1,gB_2,\ldots,gB_n\}$ where $gB_i$ is defined by~\ree{gM}.  Then the triple
$$
\left(\ \NC(n),\ C_n,\ \Cat_n(q)\ \right)
$$
exhibits the cyclic sieving phenomenon.\hqed
\eth

In~\cite{br:csn}, Bessis and Reiner generalize this theorem to certain complex reflection groups.  First consider a finite Coxeter group $W$ of rank $n$.  It can be shown that we always have a factorization
$$
f^\ell(W;q)=\sum_{w\in W} q^{\ell(w)} = \prod_{i=1}^n [d_i]_q
$$
where the positive integers $d_1\le d_2\le \ldots\le d_n$ are called the {\it degrees\/} of $W$.  For example, if $W=A_n\iso\fS_{n+1}$, then by equation~\ree{ell} and Proposition~\ref{invmaj} we have
$$
f^\ell(A_n;q)=[2]_q[3]_q\cdots[n+1]_q
$$ 
so that $d_i=i+1$ for $1\le i\le n$.  Table~\ref{degrees} lists the degrees of the irreducible finite Coxeter groups.

\begin{table}
\setlength{\unitlength}{1.5pt}
$$
\barr{c|l}
\mbox{Group}&\mbox{Degrees}\\
\hline
A_n&2,3,4,\ldots,n+1\\
B_n&2,4,6,\ldots,2n\\
D_n&2,4,6\ldots,2n-2,n\\
E_6&2,5,6,8,9,12\\
E_7&2,6,8,10,12,14,18\\
E_8&2,8,12,14,18,20,24,30\\
F_4&2,6,8,12\\
H_3&2,6,10\\
H_4&2,12,20,30\\
I_2(m)&2,m
\earr
$$
\capt{\label{degrees}  The degrees of the irreducible finite Coxeter groups}
\end{table}

There is another description of the degrees which will clarify their name and is valid for complex reflection groups, not just Coxeter groups.  Let $W$ be a finite group acting irreducibly (i.e., having no invariant subspace) on $\bbC^n$ by reflections.  We call $n$ the {\it rank\/} of $W$ and write $\rk W=n$.
 Let $X_n=\{x_1,x_2,\ldots,x_n\}$ be a set of variables.  Then the invariant space $\bbC[X_n]^W$  is a free algebra.  Each algebraically independent set of homogeneous generators has the same set of degrees which we will call $d_1,d_2,\ldots,d_n$.  These $d_i$ are exactly the same as in the Coxeter setting.  Returning to $A_n$ again, we have the natural action on 
$\bbC[X_{n+1}]$.  But each of the reflecting hyperplanes $x_i=x_j$ is perpendicular to the hyperplane $x_1+x_2+\cdots+x_{n+1}=0$, and so to get a space whose dimension is the rank, we need to consider the quotient $\bbC[X_{n+1}]^{A_n}/(x_1+x_2+\cdots+x_{n+1})$.  It is well known that the algebra $\bbC[X_{n+1}]^{A_n}$ is generated freely by the complete homogeneous symmetric polynomials $h_k(X_{n+1})$ where $1\le k\le n+1$.  But in the quotient $h_1( X_{n+1})=x_1+x_2+\cdots+x_{n+1}=0$, so we only need to consider the generators of degrees $2,3,\ldots,n+1$.  

It will also be instructive to see how one can modify the length definition in the complex case.  Let $R$ denote the set of reflections in $W$.  So if $W$ is a finite Coxeter group with generating set $S$ then $R=\{wsw^{-1}\ :\ w\in W,\ s\in S\}$.  
In any finite complex reflection group, define the {\it absolute length of $w\in W$\/}, $\ell_R(w)$, to be the shortest length of an expression $w=r_1r_2\cdots r_k$ with $r_i\in R$ for all $i$.
We have another factorization 
$$
f^{\ell_R}(W;q)=\sum_{w\in W} q^{\ell_R(w)}=\prod_{i=1}^{n} (1+(d_i-1)q)
$$
where the $d_i$ are again the degrees of $W$.

There is one last quantity which we will need to define the analogue of
$\NC(n)$. It is called  the {\it Coxeter number\/}, $h$, of a complex reflection group $W$.  Unfortunately, there are two competing definitions of $h$.  One is to let $h=d_n$, the largest of the degrees.  The other is to set $h=(\# R+\# H)/n$ where $H$ is the set of reflecting hyperplanes of $W$.  But happily these two conditions coincide when $W$  is {\it well generated\/} which means that it has a generating set of reflections of cardinality $\rk W=n$.  Note that this includes the finite Coxeter groups.  Under the well-generated hypothesis, $W$ will also contain a regular element $g$ of order $h$.  As defined by Brady and Watt in the real case~\cite{bw:kag} and Bessis in the complex~\cite{bes:fcr,bes:dbm}, the {\it noncrossing elements\/} in $W$ are
$$
\NC(W)=\{w\in W\ :\ \ell_R(w)+\ell_R(w^{-1}g)=\ell_R(g)\}.
$$
We note that $\ell_R(g)=\rk W=n$.  We will let $\spn{g}$ act on $\NC(W)$ by conjugation.  (One needs to check that this is well defined.)

To see how this relates to $\NC(n)$, map each  element of $\NC(A_{n-1})$ to the partition whose blocks are the cycles of $\pi$ considered as unordered sets.  (A similar idea is behind Lemma~\ref{gM=M}.)  Then this is a bijection with $\NC(n)$.  To illustrate,let  $n=3$ and  $g=(1,2,3)$.  A case-by-case check using the definition yields
$$
\NC(A_2)=\{(1)(2)(3),\ (1,2)(3),\ (1,3)(2),\ (1)(2,3),\ (1,2,3)\}. 
$$
So the image of this set is all partitions of $[3]$ which is $\NC(3)$ since~\ree{acbd} can not be true with only three elements.

The polynomial in the cyclic sieving result will be
$$
\Cat(W;q)=\prod_{i=1}^n \frac{[h+d_i]_q}{[d_i]_q}.
$$
For example
$$
\Cat(A_{n-1};q)=\prod_{i=1}^{n-1} \frac{[n+i+1]_q}{[i+1]_q}=\Cat_n(q).
$$
It can be shown~\cite{bes:fcr,bw:kag} that
$$
\Cat(W;1)=\#\NC(W).
$$
We can now state the Bessis-Reiner result~\cite{br:csn}.
\bth
\label{br}
Let the finite irreducible complex reflection group $W$ be well generated.  Let $g$ be a regular element of order $h$ and let $\spn{g}$ act on $\NC(W)$ by conjugation.  Then
$$
\left(\ \NC(W),\ \spn{g},\ \Cat(W;q)\ \right) 
$$
exhibits the cyclic sieving phenomenon.\hqed
\eth

The Catalan numbers can be generalized to the {\it Fuss-Catalan numbers\/} which are defined by
$$
\Cat_{n,m}=\frac{1}{mn+1}{(m+1)n\choose n}.
$$
Note that $\Cat_{n,1}=\Cat_n$.  The Fuss-Catalan numbers count, among other things, the {\it $m$-divisible\/} noncrossing partitions in $\NC(mn)$, i.e., those which have all their block sizes divisible by $m$.  So, for example, $\Cat_{2,2}={6\choose 2}/5=3$ corresponding to the partitions $\{12,34\}$, $\{14,23\}$, and $\{1234\}$.  (As usual, we are suppressing some set braces and commas.)

A natural $q$-analogue of the Fuss-Catalan numbers for any well-generated finite complex reflection group of rank $n$ is
$$
\Cat^{(m)}(W;q)=\prod_{i=1}^{n}\frac{[mh+d_i]_q}{[d_i]_q}.
$$
Armstrong~\cite{arm:gnp} has constructed a set counted by $\Cat^{(m)}(W;1)$.
As before, consider a regular element $g$ of $W$ having order $h$.  Define 
$$
\NC^{(m)}(W)=\left\{(w_0,w_1,\ldots,w_m)\in W^{m+1}\ \hs{-3.5pt}  :\
\hs{-3.5pt} w_0w_1\cdots w_m=g,\ \sum_{i\ge0}^m \ell_R(w_i)=\ell_R(g)\right\}.
$$

Armstrong also defined two actions of $g$ on $\NC^{(m)}(W)$ and made corresponding cyclic sieving conjectures about them.  These have been proved by Krattenthaler~\cite{kra:npa} for the two infinite 2-parameter families of finite irreducible well-generated complex reflection groups, and by Krattenthaler and M\"uller~\cite{km:csg,km:csgd} for the exceptional ones.
One of the actions is
\beq
\label{action}
g(w_0,w_1,w_2,\ldots,w_m)=(gw_mg^{-1},w_0,w_1,\ldots,w_{m-1}).
\eeq
This generates a group $C_{(m+1)h}$ acting on $\NC^{(m)}(W)$.
\bth
Let the finite irreducible complex reflection group $W$ be well generated.  Let $g$ be a regular element of order $h$ and let $C_{(m+1)h}$ act on $\NC^{(m)}(W)$ by~\ree{action}.  Then
$$
\left(\ \NC^{(m)}(W),\ C_{(m+1)h},\ \Cat^{(m)}(W;q)\ \right) 
$$
exhibits the cyclic sieving phenomenon.\hqed
\eth

We should mention that Gordon and Griffeth~\cite{gg:cnc} have defined a
version of the $q$-Fuss-Catalan polynomials for all complex reflection groups
which specializes to $\Cat^{(m)}(W;q)$ when $W$ is well generated.  The
primary ingredients of their construction are Rouquier's formulation of shift
functors for the rational Cherednik algebras of $W$~\cite{rou:qsa}, and
Opdam's analysis of permutations of the irreducible representations of $W$
arising from the Knizhnik-Zamolodchikov connection~\cite{opd:crg}.
Furthermore, plugging  roots of unity into the Gordon-Griffeth polynomials
yields nonnegative integers.  But finding a corresponding CSP remains
elusive. 

There is another object enumerated by $\Cat(W;1)$ when $W$ is a Coxeter group, namely facets of cluster complexes.  Cluster complexes were introduced by Fomin and Zelevinsky~\cite{fz:yga} motivated by their theory of cluster algebras.
To define them, we need some background on root systems.  Rather than take an axiomatic approach, we will rely on examples and outline the necessary facts we will need.  The reader wishing details can consult the texts of Fulton and Harris~\cite{fh:rt} or Humphreys~\cite{hum:ila}.

To every real reflection group $W$ is associated a {\it root system\/},
$\Phi=\Phi_W$, which consists of a set of vectors called {\it roots\/}
perpendicular to the reflecting hyperplanes.  Each hyperplane has exactly two
roots perpendicular to it and they are negatives of each other.  We require
that $\Phi$ span the space on which $W$ acts.  So, as above, when $W=A_n$ we
have to restrict to the hyperplane $x_1+x_2+\cdots+x_{n+1}=0$.  Finally, $W$
must act on $\Phi$.  To illustrate, if $W=A_n$ then the roots perpendicular to
the hyperplane $x_i=x_j$ are taken to be $\pm(e_i-e_j)$ where $e_i$ is the
$i$th unit coordinate vector.  Let $\Pi=\{\al_1,\al_2,\ldots,\al_n\}$ be a
set of {\it simple roots\/} which correspond to the simple
reflections $s_1,s_2,\ldots,s_n$.  For groups of type $A$-$D$ the standard
choices for simple roots are listed in Table~\ref{roots}.  Note that there are
two root systems $B_n$ and $C_n$ associated with the type $B$ group depending
on whether one considers it as the set of symmetries of a hypercube or a
hyperoctahedron, respectively.   In fact, this group is sometimes refered to
as $BC_n$.

\begin{table}
\setlength{\unitlength}{1.5pt}
$$
\barr{c|l}
\Phi&\Pi\\
\hline
A_n&e_1-e_2,\ e_2-e_3,\ \ldots,\ e_{n-1}-e_n,\ e_n-e_{n+1}\\
B_n&e_1-e_2,\ e_2-e_3,\ \ldots,\ e_{n-1}-e_n,\ e_n\\
C_n&e_1-e_2,\ e_2-e_3,\ \ldots,\ e_{n-1}-e_n,\ 2e_n\\
D_n&e_1-e_2,\ e_2-e_3,\ \ldots,\ e_{n-1}-e_n,\ e_{n-1}+e_n
\earr
$$
\capt{\label{roots}  The simple roots in types $A$-$D$}
\end{table}

One can find a hyperplane $H$ which is not a reflecting hyperplane such that
all $\al_i\in\Pi$  lie on the same side of $H$.  Since the roots come in
opposite pairs, half of them will lie on the same side of $H$ as the simple
roots and these are called the {\it positive roots\/}, $\Phi_{>0}$.  The rest
of the roots are called {\it negative\/}.  The simple roots form a basis for
the span $\spn{\Phi}$ and every positive root can be written as a linear
combination of the simple roots with positive coefficients.  In type $A$ one
can take $H$ to be any plane of the form
$c_1x_1+c_2x_2+\cdots+c_{n+1}x_{n+1}=0$ where $c_1>c_2>\ldots>c_{n+1}>0$.  In
this case $\Phi_{>0}=\{e_i-e_j\ :\ i<j\}$ and we can write 
$$
e_i-e_j=\al_i+\al_{i+1}+\cdots+\al_{j-1}
$$
where $\al_k=e_k-e_{k+1}\in\Pi$ for $1\le k\le n$.

Take the union of the positive roots with the negatives of the simple roots to get
$$
\Phi_{\ge-1}=\Phi_{>0}\cup (-\Pi).
$$
For example, if $\Phi$ is of type $A_2$, then $\Phi_{\ge-1}=\{\al_1,\al_2,\al_1+\al_2,-\al_1,-\al_2\}$ as displayed in Figure~\ref{A_2}.

\bfi
$$
\begin{pspicture}(-2,-2)(4,4)
\SpecialCoor
\psline[arrows=->](0;0)(2;0)
\psline[arrows=->](0;0)(2;60)
\psline[arrows=->](0;0)(2;120)
\psline[arrows=->](0;0)(2;180)
\psline[arrows=->](0;0)(2;300)
\rput(2.5;0){$\alpha_1$}
\rput(2.5;60){$\alpha_1+\alpha_2$}
\rput(2.5;120){$\alpha_2$}
\rput(2.5;180){$-\alpha_1$}
\rput(2.5;300){$-\alpha_2$}
\end{pspicture}
$$
\capt{\label{A_2}  The roots in $\Phi_{\ge-1}$ for type $A_2$}
\efi

Take any partition $[n]=I_+\uplus I_-$ such that the nodes indexed by $I_+$ in
the Dynkin diagram of $W$ are totally disconnected, and the same is true for $I_-$.  Note that this means that the reflections in $I_\ep$ commute for $\ep\in\{+,-\}$.
Next, define a pair of involutions $\tau_{\pm}:\Phi_{\ge-1}\ra\Phi_{\ge-1}$ by
$$
\tau_\ep(\al)=
\case{\al}{if $\al=-\al_i$ for $i\in I_{-\ep}$,}{\left(\dil\prod_{i\in I_\ep} s_i\right)(\al)}{\rule{0pt}{20pt}otherwise.}
$$
Since the roots in $I_\ep$ commute, the product is well defined.
Returning to our example, let $I_+=\{1\}$ and $I_-=\{2\}$,  Table~\ref{tau} displays the images of each root in $\Phi_{\ge-1}$ under $\tau_+$ and $\tau_-$.

\begin{table}
$$
\barr{c|c|c}
\al		&\tau_+(\al)	&\tau_-(\al)\\
\hline
\al_1		&-\al_1		&\al_1+\al_2\\
\al_2		&\al_1+\al_2	&-\al_2\\
\al_1+\al_2	&\al_2		&\al_1\\
-\al_1		&\al_1		&-\al_1\\
-\al_2		&-\al_2		&\al_2
\earr
$$
\capt{\label{tau} The images of $\tau_+$ and $\tau_-$}
\end{table}

Consider the product $\Ga=\tau_-\tau_+$ (where maps are composed right-to-left) and the cyclic group $\spn{\Ga}$ it generates acting on $\Phi_{\ge-1}$.  In the running example, $\Ga$ consists of a single cycle
$$
\Ga=(\ \al_1,\ -\al_1,\ \al_1+\al_2,\ -\al_2,\ \al_2\ ).
$$
This map induces a relation of {\it compatibility\/}, $\al\sim\be$, on $\Phi_{\ge-1}$ defined by the following two conditions.
\ben
\item For $-\al_i\in -\Pi$ and $\be\in\Phi_{>0}$ we have $-\al_i\sim\be$ if and only if $\al_i$ does not occur in the simple root expansion of $\be$.
\item For all $\al,\be\in\Phi_{\ge-1}$ we have $\al\sim\be$ if and only if $\Ga(\al)\sim\Ga(\be)$.
\een
In our example, $-\al_1\sim\al_2$ by the first condition.  Then repeated application of the second yields $\al\sim\be$ for all $\al,\be\in\Phi_{\ge-1}$ when $W=A_2$.

The {\it cluster complex\/}, $\De(\Phi)$, is the abstract simplicial complex (i.e., a family of sets called  {\it faces\/} closed under taking subsets) consisting of all sets of pairwise compatible elements of $\Phi_{\ge0}$.  So in the $A_2$ case, $\De(\Phi)$ consists of a single facet (maximal face) which is all of $\Phi_{\ge-1}$.  The faces of $\De(\Phi)$ can be described in terms of dissections of polygons using noncrossing diagonals.  Using this interpretation, Eu and Fu~\cite{ef:csp} prove the following result.
\bth
\label{ef:thm}
Let $W$ be a finite Coxeter group and let $\Phi$ be the corresponding root system.  Let $\De_{\max}(\Phi)$ be the set of facets of the cluster complex $\De(\Phi)$.  Then the triple
$$
\left(\ \De_{\max}(\Phi),\ \spn{\Ga},\ \Cat(W;q)\ \right)
$$
exhibits the cyclic sieving phenomenon.\hqed
\eth
In fact, Eu and Fu strengthened this theorem in two ways: by looking at the faces of dimension $k$ and by considering an $m$-divisible generalization of the cluster complex due to Fomin and Reading~\cite{fr:gcc}.

\section{A cyclic sieving miscellany}

Here we collect some topics not previously covered.  These include methods for generating new CSPs from old ones, sieving for cyclic polytopes, and extending various results to arbitrary fields.

Berget, Eu, and Reiner~\cite{ber:ccs} gave various ways to construct CSPs and we will discuss one of them now.  Let $f(q)=\sum_{i=0}^l m_i q^i\in\bbN[q]$ satisfy $f(1)=n$.  Let $X_n=\{x_1,x_2,\ldots,x_n\}$ be a set of variables and let $p(X_n)$ be a polynomial symmetric in the $x_i$.  The {\it plethystic substitution\/} of $f$ into $p$ is
$$
p[f]=p(\underbrace{1,\ldots,1}_{m_0},\underbrace{q,\ldots,q}_{m_1},\ldots,\underbrace{q^l,\ldots,q^l}_{m_l}).
$$
Note that since $p$ is symmetric, it does not matter in what order one substitutes these values.  For a concrete example, let $f(q)=1+2q$ and $p(X_3)=h_2(x_1,x_2,x_3)$ as in~\ree{chi'ex}.  Then
$
h_2[f]=h_2(1,q,q)=1+2q+3q^2.
$
If $f(q)=[n]_q$ then $p[f]=p(1,q,\ldots,q^{n-1})$ is the principal specialization of $p$.  Plethysm is useful in describing the representations of wreath products of groups.  

 We will also need the {\it elementary symmetric polynomials\/}
$$
e_k(x_1,x_2,\ldots,x_n)=\sum_{1\le i_1< i_2<\ldots< i_k\le n} x_{i_1}x_{i_2}\cdots x_{i_k},
$$
i.e., the sum of all square-free monomials of degree $k$ in the $x_i$.  For example,
$
e_2(x_1,x_2,x_3)=x_1x_2+x_1x_3+x_2x_3.
$
It is well known that the algebra symmetric polynomials in the variables $X_n$ is freely generated by 
$e_1(X_n),e_2(X_n),\ldots, e_n(X_n)$ and this is sometimes called the Fundamental Theorem of Symmetric Polynomials.  The $e_i(X_n)$ are dual to the $h_j(X_n)$ in a way that can be made precise.  

Finally, it will be convenient to extend some of our concepts slightly.  We will use notations~\ree{(())} and~\ree{()}  replacing $[n]$ with any set.   We can also consider {\it symmetric functions\/} which are formal power series in the variables $X=\{x_1,x_2,x_3,\ldots\}$ which are invariant under all permutations of variables and are of bounded degree.  For example, the complete homogeneous symmetric function is
$$
h_k(X)=\sum_{1\le i_1\le i_2\le\ldots\le i_k} x_{i_1}x_{i_2}\cdots x_{i_k}.
$$
In this case, we can still make a plethystic substitution of a polynomial $f(q)$ by letting $x_i=0$ for $i>n=f(1)$.
One of the results of Berget, Eu, and Reiner~\cite{ber:ccs} is as follows.
\bth
If a triple $(X,C,f(q))$ exhibits the cyclic sieving phenomenon, then the triple
$$
\left(\ \left(\hs{-3pt}{X\choose k}\hs{-3pt}\right),\ C,\ h_k[f(q)]\ \right)
$$
does so as well.

If, in addition, $\#C$ is odd then the triple
$$
\left(\ {X\choose k},\ C,\ e_k[f(q)]\ \right)
$$
also exhibits the cyclic sieving phenomenon.\hqed
\eth

Some remarks about this theorem are in order.  First of all, the authors
actually prove it for $C$ a product of cyclic groups and multi-cyclic sieving,
but this does not materially alter the demonstration.  The proof uses
symmetric tensors for the first part (much in the way they were used in
Section~\ref{rtb}) and alternating tensors in the second.  So the restriction
on $\#C$ is there to control the sign.  Finally, it is instructive to note how
the first part implies our old friend, Theorem~\ref{multiset}.  Indeed, it is
clear that the triple $( [n],\spn{(1,2,\ldots,n)}, [n]_q)$ exhibits the CSP:
Only the identity element of the group has fixed points and there are $n$ of
them, while for $d|n$ we have
$$
[n]_{\om_d}=\case{n}{if $d=1$,}{0}{otherwise.}
$$
Now applying Lemma~\ref{hk:lem} and the previous theorem completes the proof.

Eu, Fu, and Pan~\cite{efp:csp} investigated the CSP for faces of cyclic polytopes.  The {\it moment curve in dimension $d$\/} is  $\ga:\bbR\ra\bbR^d$ defined parametrically by
$$
\ga(t)=(t,t^2,\ldots,t^d).
$$
Given real numbers $t_1<t_2<\ldots<t_n$, the corresponding {\it cyclic polytope\/} is the convex hull
$$
\CP(n,d)=\conv\{\ga(t_1),\ga(t_2),\ldots,\ga(t_n)\}.
$$
A good reference for the theory of convex polytopes is Ziegler's book~\cite{zie:lp}.  

It is known that the $\ga(t_i)$ are the vertices of $\CP(n,d)$.  Also, its  combinatorial type  (i.e., the structure of its faces) does not depend on the parameters $t_i$.  Let $f_k(n,d)$ denote the number of faces of $\CP(n,d)$ of dimension $k$ which is well defined by the previous sentence.  We also let $\CP_k(n,d)$ denote the set of such faces.  Cyclic polytopes are famous, in part, because they have the maximum number of faces in all dimensions $k$ among all polytopes with $n$ vertices in $\bbR^d$.

In what follows, we will assume $d$ is even.  There is a formula for the face
numbers (for all $d$).  In particular, for $0\le k<d$ and $d$ even
$$
f_k(n,d)=\sum_{j=1}^{d/2} \frac{n}{n-j}{n-j\choose j}{j\choose k+1-j}.
$$
The reader will not be surprised that we will use the $q$-analogue
$$
f_k(n,d;q)=\sum_{j=1}^{d/2} \frac{[n]_q}{[n-j]_q}\gau{n-j}{j}_q \gau{j}{k+1-j}_q.
$$

Let $g\in\spn{(1,2,\ldots,n)}$ act on the vertices of $\CP(n,d)$ by sending vertex $\ga(t_i)$ to $\ga(t_{g(i)})$.  For even $d$  this induces an automorphism of $\CP(n,d)$ in that it sends faces to faces.  We can now state the main result of Eu, Fu, and Pan~\cite{efp:csp}.
\bth
Suppose $d$ is even and $0\le k<d$.  Then the triple
$$
\left(\ \CP_k(n,d),\ \spn{(1,2,\ldots,n)},\ f_k(n,d;q)\ \right)
$$
exhibits the cyclic sieving phenomenon.\hqed
\eth
For odd $d$ the $n$-cycle does not necessarily induce an automorphism of $\CP(n,d)$ and so there can be no CSP.  However, in this case there are actions of certain groups of order 2 and it would be interesting to find CSPs for them.

Reiner, Stanton, and Webb~\cite{rsw:sre} considered extending Springer's theory to arbitrary fields.  Broer, Reiner, Smith, and Webb~\cite{brsw:ect} continued this work and also extended various invariant theory results of Chevalley, Shephard-Todd, and Mitchell such as describing the relationship between the coinvariant and group algebras.  Let $V$ be an $n$-dimensional vector space over a field $k$, and let $G$ be a finite subgroup of $GL(V)$ generated by (pseudo)-reflections which are defined as in the complex case.  Then $G$ acts on the polynomial algebra $\cS=k[x_1,x_2,\ldots,x_n]$.  Assume that $\cS^G$ is a (free) polynomial algebra so that $\cS^G=k[f_1,f_2,\ldots,f_n]$ for polynomials $f_1,f_2,\ldots,f_n$.

To define regular elements, one must work in the algebraic closure $\kb$ of
$k$.  Let $\ol{V}=V\otimes_k \kb$.  Call an element $g\in G$ {\it regular\/}
if it has an eigenvector $v\in\ol{V}$ lying on none of the reflecting
hyperplanes $\Hb=H\otimes_k k$ for reflections in $G$.  It can be shown that
in this case $o(g)$ is invertible in $k$.  This implies that the $\spn{g}$-submodules of
any $G$-module are {\it completely reducible\/}, meaning that they can be written as a
direct sum of irreducibles.  (Recall that complete reducibility 
is not guaranteed over arbitrary fields as it is over $\bbC$ by Maschke's
Theorem, Theorem~\ref{G} (b).) 

Now consider any subgroup $H\le G$.  The cyclic sieving set will be the cosets
$G/H$ acted upon by left multiplication of the regular element $g$.    For the function we will take
the quotient $\Hilb(\cS^H;q)/\Hilb(\cS^G;q)$.  But one has to make sure that this
is in $\bbN[q]$ and not just a rational function.  Reiner, Stanton, and
Webb~\cite{rsw:sre} explained why this must be a polynomial with integer
coefficients.  But in stating their  CSP they had to assume extra conditions
on $H$ so that they could prove the coefficients were nonnegative.  They also
asked whether it was possible to prove the CSP without these hypotheses, and
this was done in by Broer, Reiner, Smith, and Webb~\cite{brsw:ect} thus
generalizing Theorem~\ref{reg}. 
\bth
\label{brswthm}
Let $V$ be a finite-dimensional vector space over a field $k$.  Let $G$ be a finite subgroup of $\GL(V)$ for which $\cS^G$ is a polynomial algebra.   Let $g$ be a regular element of $G$ acting on $G/H$ by left multiplication.  Then for any $H\le G$, the triple
$$
\left(\ G/H,\ \spn{g},\ \frac{\Hilb(\cS^H;q)}{\Hilb(\cS^G;q)}\ \right)
$$
exhibits the cyclic sieving phenomenon.\hqed
\eth

\section{Remarks}

\subsection{Alternate definitions}

In their initial paper~\cite{rsw:csp}, Reiner, Stanton, and White gave a
second, equivalent,  definition of the CSP.  While this one has not come to be used as much
as~\ree{csp}, we mention it here for completeness.   

Given a group $G$ acting on a set $X$, denote the stabilizer subgroup of $y\in X$ by
$$
G_y=\{g\in G\ :\ gy=y\}.
$$
If $x,y$ are in the same orbit $\cO$ then their stabilizers are conjugate (and if $G$ is Abelian they are actually equal).  So if $y\in\cO$ then call  $s(\cO)=\#G_y$ the {\it stabilizer-order\/} of $\cO$ which is well defined by the previous sentence.

Suppose $f(q)=\sum_{i\ge0} m_i q^i\in\bbN[q]$ and define coefficients $a_i$ for $0\le i<n$ by
$$
f(q)\Cong a_0+a_1q+\cdots+a_{n-1} q^{n-1}\ (\Mod 1-q^n).
$$
Equivalently,
$$
a_i=\sum_{j\Cong i\ (\Mod n)} m_j.
$$
\begin{Def}
Suppose $\# X=n$, $C$ is a cyclic group acting on $X$, and the $a_i$ are as above.  The triple $(X,C,f(q))$ exhibits the {\it cyclic sieving phenomenon\/} if, for  $0\le i<n$,
\beq
\label{csp2}
a_i=\#\{\cO\ :\ s(\cO)|i\}.
\eeq
\end{Def}

Note that if~\ree{csp2} is true then $a_0$ counts the total number of $C$-orbits, while $a_1$ counts the number of free orbits (those of size $\# C$).  In fact, one can use these equations to determine how many orbits there are of any size using M\"obius inversion.  Returning to our original example with $C=\spn{(1,2,3)}$ and  $2$-element multisets on $[3]$, the orbits were
$$
\cO_1=(11,22,33),\ \cO_2=(12,23,13).
$$
On the other hand
$$
f(q)=1+q+2q^2+q^3+q^4\Cong 2+2q+2q^2\ (\Mod 1-q^3)
$$
indicating that there are 2 orbits total with both of them being free.  The coefficient $a_2=2$ as well since a free orbit's stabilizer-order of 1 will divide any other.

The proof that these two definitions are equivalent is via the representation theory paradigm, Theorem~\ref{csp:thm}.  The main tool is Frobenius reciprocity.

In a personal communication, Reiner has pointed out that it might also be
interesting to define cyclic sieving with more general polynomials.  One
possibility would be to allow negative integral coefficients which could be useful,
for example, when considering quotients of Hilbert series.  Note that
this issue arose in the genesis of Theorem~\ref{brswthm}.
Another extension could be to Laurent polynomials, that is, elements of
$\bbZ[q,q^{-1}]$.  Such polynomials might come up when considering a bivariate
generating function $f(q,t)$ where one lets $t=q^{-1}$.  We have
already seen such a substitution in Theorem~\ref{S(la)}, although in that case
it turns out that the generating function remains an ordinary polynomial.

\subsection{More on Catalan CSPs}

We have just begun to scratch the surface of the connection between Catalan combinatorics and cyclic sieving.  We have already mentioned how polygonal dissections is behind the work of Eu and Fu on cluster complexes~\cite{ef:csp} as in  Theorem~\ref{ef:thm}.  We will now describe an open problem and some ongoing work about triangulations, i.e., dissections where every face is a triangle.

Let $P$ be a regular $n$-gon.  Let $\cT_n$ denote the set of triangulations $T$ of $P$ using nonintersecting diagonals.  It is well known that
$$
\#\cT_{n+2}=\Cat_n.
$$ 
We will act on triangulations by clockwise rotation.  So, for example, for the pentagon these is  only one cycle
$$
\left(\
\begin{pspicture}(-1,0)(1.1,1)
\SpecialCoor
\psline(1;0)(1;72)
\psline(1;72)(1;144)
\psline(1;144)(1;216)
\psline(1;216)(1;288)
\psline(1;288)(1;0)
\psline(1;0)(1;144)
\psline(1;0)(1;216)
\end{pspicture}
,\quad
\begin{pspicture}(-1,0)(1.1,1)
\SpecialCoor
\psline(1;0)(1;72)
\psline(1;72)(1;144)
\psline(1;144)(1;216)
\psline(1;216)(1;288)
\psline(1;288)(1;0)
\psline(1;288)(1;72)
\psline(1;288)(1;144)
\end{pspicture}
,\quad
\begin{pspicture}(-1,0)(1.1,1)
\SpecialCoor
\psline(1;0)(1;72)
\psline(1;72)(1;144)
\psline(1;144)(1;216)
\psline(1;216)(1;288)
\psline(1;288)(1;0)
\psline(1;216)(1;0)
\psline(1;216)(1;72)
\end{pspicture}
,\quad
\begin{pspicture}(-1,0)(1.1,1)
\SpecialCoor
\psline(1;0)(1;72)
\psline(1;72)(1;144)
\psline(1;144)(1;216)
\psline(1;216)(1;288)
\psline(1;288)(1;0)
\psline(1;144)(1;288)
\psline(1;144)(1;0)
\end{pspicture}
,\quad
\begin{pspicture}(-1,0)(1.1,1)
\SpecialCoor
\psline(1;0)(1;72)
\psline(1;72)(1;144)
\psline(1;144)(1;216)
\psline(1;216)(1;288)
\psline(1;288)(1;0)
\psline(1;72)(1;216)
\psline(1;72)(1;288)
\end{pspicture}\
\right).
$$

Reiner, Stanton, and White~\cite{rsw:csp} proved the following theorem in this setting.  (In fact, they proved a stronger result about dissections using noncrossing diagonals where one fixes the number of diagonals.)
\bth
\label{rsw:cat}
Let $C_{n+2}$ act on $\cT_{n+2}$ by rotation.  Then the triple
$$
(\ \cT_{n+2},\ C_{n+2},\ \Cat_n(q)\ )
$$
exhibits the cyclic sieving phenomenon.\hqed
\eth

Their proof was of the sort where one evaluates both sides of~\ree{csp} directly.  But as mentioned before, these proofs often lack the beautiful insights one obtains from using representation theory.  It would be very interesting to find such a proof, perhaps by finding an appropriate complex reflection group along with a basis and fake degree polynomial which would permit the use of Westbury's Theorem~\ref{wes:thm}.

\bfi
$$
\begin{pspicture}(-2,-1.5)(3.1,2)
\SpecialCoor
\psline(1;0)(1;72)
\psline(1;72)(1;144)
\psline(1;144)(1;216)
\psline(1;216)(1;288)
\psline(1;288)(1;0)
\psline(1;0)(1;144)
\psline(1;0)(1;216)
\rput(1.3;0){$1$}
\rput(1.3;288){$2$}
\rput(1.3;216){$1$}
\rput(1.3;144){$2$}
\rput(1.3;72){$1$}
\end{pspicture}
\hspace{30pt}
\begin{pspicture}(-2,-1.5)(3.1,2)
\SpecialCoor
\psline(1;0)(1;72)
\psline(1;72)(1;144)
\psline(1;144)(1;216)
\psline(1;216)(1;288)
\psline(1;288)(1;0)
\psline(1;216)(1;0)
\psline(1;216)(1;72)
\rput(1.3;0){$1$}
\rput(1.3;288){$2$}
\rput(1.3;216){$1$}
\rput(1.3;144){$2$}
\rput(1.3;72){$1$}
\end{pspicture}
$$
\capt{\label{pro} Two triangulations, one proper (left) and one not (right)}
\efi

One can also consider colored triangulations.  Label (``color'') the vertices of the polygon $P$ clockwise $1,2,1,2,\ldots$.  (When $n$ is odd, there will be an edge of $P$ with both endpoints labeled $1$.)  Call a triangulation {\it proper\/} if it contains no monochromatic triangle.  (This terminology is both by analogy with proper coloring of graphs and in honor of Jim Propp who first conjectured~\ree{cP}.)  In Figure~\ref{pro}, the left-hand triangulation is proper while the one on the right is not.  Let $\cP_n$ be the set of proper triangulations of an $n$-gon.  Sagan~\cite{sag:ppp} proved that
\beq
\label{cP}
\# \cP_{N+2}=\case{\dil\frac{2^n}{2n+1}{3n\choose n}}
{if $N=2n$ where $n\in\bbN$,}
{\dil\frac{2^{n+1}}{2n+2}{3n+1\choose n}}
{if $N=2n+1$ where $n\in\bbN$.\rule{0pt}{30pt}}
\eeq
Note that for $N=2n$ we have $\# \cP_{N+2}= 2^n \Cat_{2,n}$.

Roichman and Sagan~\cite{rs:ccp} are studying CSPs for colored dissections.  In the triangulation case, notice that when $n$ is odd then there is no action of $C_n$ on $\cP_n$ because it is possible for the rotation of a proper triangulation to be improper as in Figure~\ref{pro}.  So it only makes sense to consider a rotational CSP for $n$ even.  They have proved that one does indeed have such a phenomenon, although the necessary $q$-analogue for the factor of $2^n$ is somewhat surprising.
\bth
Let $N=2n$ and let $C_{N+2}$ act on $\cP_{N+2}$ by rotation.  Then the triple
$$
\left(\ \cP_{N+2},\ C_{N+2},\ 
\frac{[2]_{q^2}\left([2]_q^{n-1}-[2]_q^{\ce{n/2}-1}+2^{\ce{n/2}-1}\right)}{[2n+1]_q}
\gau{3n}{n}_q\ \right)
$$
exhibits the cyclic sieving phenomenon.\hqed
\eth

\subsection{A combinatorial proof}

One could hope for purely combinatorial proofs of CSPs.  Since it may not be clear exactly what this would entail, consider the following paradigm.  First of all, we would need to have a combinatorial expression for $f(q)$, namely some statistic on the set $X$ such that
\beq
\label{f(X)}
f^{\sta}(X;q)=f(q).
\eeq

Suppose further that, for each $g\in C$, one has a partition of $X$
$$
\pi=\pi_g=\{B_1,B_2,\ldots,B_k\}
$$
satisfying the following criterion where $\om=\om_{o(g)}$:
\beq
\label{f(B_i)}
f^{\sta}(B_i;\om)=\case{1}{if $i\le\# X^g$,}{0}{if $i>\# X^g$.}
\eeq
In other words, the initial blocks correspond to the fixed points of $g$ and their weights evaluate to 1 when plugging in $\om$, while the weights of the rest of the blocks get zeroed out under this substitution.  
(In practice, the $B_i$ for $i\le\# X^g$ are singletons each with weight $q^j$
where $o(g)|j$, while for $i>\# X^g$ the sum of the weights in the block form
a geometric progression which becomes zero since $1+\om+\cdots+\om^{d-1}=0$
for any proper divisor $d$ of $o(g)$.)
In this case, one automatically has cyclic sieving because, using equations~\ree{f(X)} and~\ree{f(B_i)} as well as the fact that we have a partition
$$
f(\om)=f^{\sta}(X;\om)=\sum_{i\ge0} f^{\sta}(B_i;\om)=
\underbrace{1+\cdots+1}_{\# X^g}+0+\cdots+0=\# X^g.
$$

Roichman and Sagan~\cite{rs:ccp} have succeeded in using this method to prove
Theorem~\ref{rsw:first}.  They are currently working on trying to apply it to
various other cyclic sieving results.

\vs{.25in}

\noindent{{\bf Note added in proof}}

\vs{.18in}

Since this article was written six new papers have appeared related to the
cyclic sieving phenomenon.  For completeness' sake, we briefly describe each of them
here.

In~\cite{ast:ubb}, Armstrong, Stump, and Thomas constructed a bijection between
noncrossing partitions and nonnesting partitions which sends
a complementation map of Kreweras~\cite{kre:pnc}, {\bf Krew}, to a function of
Panyushev~\cite{pan:oap},
{\bf Pan}.  Using this construction they are able to prove two CSP
conjectures in the paper of Bessis and Reiner~\cite{br:csn} about {\bf Krew}
and {\bf Pan}.
The first refines Theorem~\ref{br} in the case that $W$ is a finite Coxeter
group since ${\bf Krew}^2$ coincides with the action of the regular element.
The second follows from the first using the bijection.

Kluge and Rubey~\cite{kr:cst} have obtained a cyclic sieving result for
rotation of 
Ptolemy diagrams.  These diagrams were recently introduced in a paper of Holm,
J{\o}rgensen, and Rubey~\cite{hjr:pdt} as a model for torsion pairs in the
cluster category of type $A$.  Their result is related to
the generalization of Theorem~\ref{rsw:cat} mentioned just before its
statement, except that certain diagonals are allowed to cross and  
one keeps track of the number of regions of various types rather than the
number of diagonals.  But the polynomial in both cases is a product of
$q$-binomial coefficients, so it would be interesting to find a common
generalization.

Noncrossing graphs on the vertex set $[n]$ can be defined analogously to
noncrossing polygonal dissections by arranging the vertices around a circle and insisting
that the resulting graph be planar.  Flajolet and Noy~\cite{fn:acn} showed
that the number of connected noncrossing graphs with $n$ vertices and $k$
edges is given by
$$
\frac{1}{n-1}{3n-3\choose n+k}{k-1\choose n-2}
$$
Following a personal communication of S.-P.\ Eu, Guo~\cite{guo:csp} has shown
that one has a CSP using these graphs, rotation, and the expected $q$-analogue
of the expression above.

Another way to generalize noncrossing dissections into triangles is to
define a {\it $k$-triangulation\/} of a convex $n$-gon to be a maximal
collection of diagonals such that no $k+1$ of them mutually cross.  So
ordinary triangulations are the case $k=1$.  In a personal communication,
Reiner has conjectured a CSP for such triangulations under rotation
generalizing Theorem~\ref{rsw:cat}.  In~\cite{ss:mfm}, Serrano and Stump
reformulated this conjecture in terms of $k$-flagged tableaux (certain
semistandard tableaux with bounds on the entries).  But the conjecture remains
open.  

As has already been mentioned, there is no representation theory proof of
Theorem~\ref{rsw:cat}.  The same is true of the Eu and Fu's result,
Theorem~\ref{ef:thm}.  In an attempt to partially remedy this situation,
Rhoades~\cite{rho:csc} has used representation theory and cluster
multicomplexes to prove related CSPs.  
His tools include a notion of
noncrossing tableaux due to Pylyavskyy~\cite{pyl:nct} and geometric
realizations of finite type cluster algebras due to Fomin and Zelevinsky~\cite{fz:ca2}.

Westbury~\cite{wes:itc2} has succeeded in generalizing Rhoades's original
result~\cite{rho:csp} just as he was able to do for the special case
considered by Petersen, Pylyavskyy, and Rhoades~\cite{ppr:pcs} for two and
three rows.  It turns out that the same tools (crystal bases, based modules,
and regular elements) can be used.

\bigskip

\myaddress

\end{document}